\newcommand{\trans}{^\intercal} 
\newcommand{\inv}{^{-1}} 
\newcommand{\downto}{\downarrow} 
\newcommand{\ident}{\ensuremath{\mathrm{I}}} 
\newcommand{\dd}{\ensuremath{\mathrm{d}}} 
\newcommand{\reals}{\ensuremath{\mathds{R}}} 
\newcommand{\realspos}{\ensuremath{\reals_{>0}}} 
\newcommand{\realsnonneg}{\ensuremath{\reals_{\geq 0}}} 
\newcommand{\ereals}{\ensuremath{\overline{\reals}}} 
\newcommand{\naturals}{\ensuremath{\mathds{N}}} 
\DeclareMathOperator*{\closure}{clos} 
\DeclareMathOperator{\tr}{tr} 
\DeclareMathOperator{\diverg}{div} 
\newcommand{\paren}[1]{\ensuremath{\left(#1\right)}}
\newcommand{\fparen}[1]{\ensuremath{\!\left(#1\right)}}
\newcommand{\brackt}[1]{\ensuremath{\left[#1\right]}}
\newcommand{\bracet}[1]{\ensuremath{\left\{#1\right\}}}
\newcommand{\setb}[2]{\ensuremath{\bracet{#1 \colon #2}}}
\newcommand{\norm}[2][]{\ensuremath{\left\lVert#2\right\rVert}_{#1}}
\newcommand{\normnm}[2][]{\ensuremath{\lVert#2\rVert}_{#1}}
\newcommand{\innerp}[2][]{\ensuremath{\left\langle#2\right\rangle}_{#1}}
\newcommand{\abs}[1]{\ensuremath{\left\lvert#1\right\rvert}}
\newcommand{\Prob}[2][]{%
    \ensuremath{P\ifthenelse{\isempty{#1}}{}{_{#1}\!}\fparen{#2}}%
}
\newcommand{\pr}[2][]{%
  \ensuremath{p\ifthenelse{\isempty{#1}}{\!}{_{#1}\negmedspace}\paren{#2}}%
}
\newcommand{\E}[2][]{\ensuremath{\mathrm{E}_{#1}\negmedspace\left[#2\right]}}
\newcommand{\normald}[1]{\ensuremath{\mathcal{N}\!\paren{#1}}}
\newcommand{\cond}{\ensuremath{\,\middle\vert\,}}
\newcommand{\inputtikzpicture}[1]{%
  \includegraphics{fig/#1.pdf}%
}
\newcommand{\events}{\ensuremath{\mathcal{E}}}
\newcommand{\partition}{\ensuremath{\mathcal{P}}}
\newcommand{\mesh}{\ensuremath{\bar\delta}}
\newcommand{\aset}{\ensuremath{\mathcal{A}}}
\newcommand{\yset}{\ensuremath{\mathcal{Y}}}
\newcommand{\etaspace}{\ensuremath{\mathcal{H}}}
\newcommand{\ispace}{\ensuremath{\mathcal{I}}}
\newcommand{\sspace}{\ensuremath{\mathcal{S}}}
\newcommand{\tspace}{\ensuremath{\mathcal{T}}}
\newcommand{\xspace}{\ensuremath{\mathcal{X}}}
\newcommand{\tmeas}{\ensuremath{\tspace_\mathrm{m}}}
\newcommand{\energy}{\ensuremath{\mathrm{e}}}
\newcommand{\etube}[1][\phi]{\ensuremath{\mathcal{B}^\epsilon_{#1}}}
\newcommand{\entube}[1][\eta,x_0]{\ensuremath{\mathcal{C}^\epsilon_{#1}}}
\date{}
\begin{document}

\begin{frontmatter}
  \title{Maximum \emph{a Posteriori} \ State Path Estimation: Discretization Limits and their Interpretation\thanksref{footnoteinfo}}

  \thanks[footnoteinfo]{This work has been financially supported by the Brazilian agencies CAPES, CNPq, FAPEMIG and FINEP.}
  
  \author{Dimas Abreu Dutra}\ead{ddutra@cpdee.ufmg.br},\hfill
  \author{Bruno Otávio Soares Teixeira}\ead{brunoot@ufmg.br},\hfill
  \author{Luis Antonio Aguirre}\ead{aguirre@cpdee.ufmg.br}.

  \address{Programa de Pós-Graduação em Engenharia Elétrica - Universidade Federal de Minas Gerais - Av. Antônio Carlos 6627, 31270-901, Belo Horizonte, MG, Brasil}
  
  \begin{keyword}
    Estimation theory; optimization problems; variational analysis; stochastic systems.
  \end{keyword}
  
  \begin{abstract}
    Continuous-discrete models with dynamics described by stochastic differential equations are used in a wide variety of applications.
    For these systems, the maximum \emph{a posteriori} (MAP) state path can be defined as the curves around which lie the infinitesimal tubes with greatest posterior probability, which can be found by maximizing a merit function built upon the Onsager--Machlup functional.
    A common approach used in the engineering literature to obtain the MAP state path is to discretize the dynamics and obtain the MAP state path for the discretized system.
    In this paper, we prove that if the trapezoidal scheme is used for discretization, then the discretized MAP state path estimation converges hypographically to the continuous-discrete MAP state path estimation as the discretization gets finer.
    However, if the stochastic Euler scheme is used instead, then the discretized estimation converges to the minimum energy estimation.
    The minimum energy estimates are, in turn, proved to be the state paths associated with the MAP \emph{noise} paths, which in some cases differ from the MAP \emph{state} paths.
    Therefore, the discretized MAP state paths can have different interpretations depending on the discretization scheme used.
  \end{abstract}
\end{frontmatter}

\section{Introduction}
\label{sec:introduction}

A wide variety of phenomena of engineering interest are continuous-time in nature and can be modelled by stochastic differential equations (SDEs).
In practical situations, the measurements of these systems are usually sampled, leading to continuous-discrete models, i.e., models with continuous-time dynamics and measurements taken in discrete-time.
For this class of models, the maximum \emph{a posteriori} (MAP) state paths can be defined as the curves which have the greatest posterior sojourn probability, i.e., that maximize the probability that the state process lies inside an infinitesimal tube around them.
The asymptotic prior probability of $\epsilon$-tubes around state paths, as $\epsilon$ vanishes, is given by the Onsager--Machlup functional \citep{fujita1982omf, zeitouni1987map, capitaine1995omf}, which can be used to construct a variational problem for MAP state path estimation in continuous-discrete models \citep{aihara1999mln}.
It should be noted, however, that variational MAP state path estimation using the Onsager--Machlup functional is not common in the engineering literature.
One of the few exceptions is the work of \citet{aihara1999mem,aihara1999mln}, where only experiments with simulated data are shown.

In the context of discrete-time dynamical systems, there exist several methods for maximum \emph{a posteriori} (MAP) state path estimation \citep{bell1994iks, godsill2001map, aravkin2011llr, monin2013mte}.
The MAP state path consists of the joint posterior mode of the states over all time indices, given a series of measurements.
These estimators have been the focus of many recent publications due to their robustness properties and larger applicability than nonlinear Kalman smoothers \citep{farahmand2011drs, aravkin2011llr, aravkin2011lus, aravkin2012rtf, aravkin2012sct, dutra2012jmaps, monin2013mte}.

To use discrete-time MAP state path estimation in the context of continuous-discrete systems, the system dynamics needs to be converted to discrete-time.
However, while the discretization of linear systems can be exact, for general nonlinear SDEs it is necessary to employ approximations such as the stochastic Euler, trapezoidal or It\=o--Taylor schemes \citep{kloeden1992nss}.
Applications of MAP state path estimation on discretized continuous-discrete systems described by SDEs are presented in the work of \citet{bell2009icn} and \citet{aravkin2011llr, aravkin2012rtf}.
In these applications, the former used the exact discretization for linear systems, while the latter used the Euler discretization for nonlinear systems.

Discrete-time approximations of SDEs improve as the discretization step decreases.
Applications in Kalman filtering that use such discretization schemes, for example, usually require fine discretizations to produce meaningful results \citep{sarkka2012cdc}, especially when the measurements are sparse.
An important question then arises: how are the MAP state paths of the discretized systems related to the MAP state path of the original system?
Do the discretized estimates converge to continuous-time functions?

\begin{figure}[b]
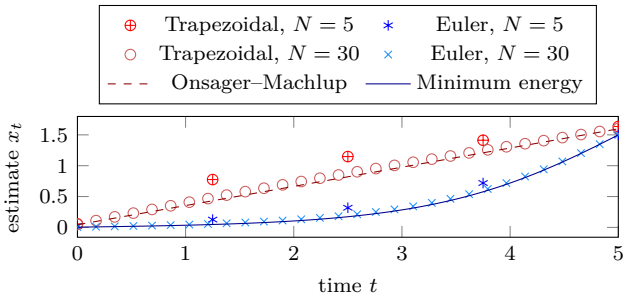

  \centering
  \inputtikzpicture{benes_estimators}
  \caption{
    Comparison of the different MAP estimators for a simple SDE.
    The discretizations with the exact transition densities lie on the state path obtained with the Onsager--Machlup, for all $N$, and are not shown to avoid clutter.
  }
  \label{fig:benes_estimators}
\end{figure}

In this paper, we use the theory of consistent discretizations \citep{polak1997oac} to show that, under some regularity conditions, the MAP estimates using the Euler and trapezoidal schemes converge to maxima of variational problems.
The limit variational problem for the trapezoidal discretization is the MAP state path estimation problem.
For the Euler discretization, however, the limit variational problem is the minimum energy estimation problem \citep[Sec.~3.7]{cox1963esv}, whose merit function is similar to that of the MAP problem but lacks a correction term which is related to the amplification of noise by the drift.
The minimum energy estimator was, for many years, believed to be a good MAP estimator \citep[p.~234]{zeitouni1987map}.
Its estimates are MAP state paths, however, only when the correction term is
constant.
In this paper we prove that, in the general case, the minimum energy estimates correspond to the state paths associated with the MAP \emph{noise} paths.

The main contributions of this paper are%
\begin{inparaenum}[\it i)]%
\item providing the statistical interpretation for minimum energy estimates as the state path associated with the MAP noise path,
\item proving a link between variational and discretized MAP state path estimation, and
\item proving that the limits of the discretized MAP state path estimation problems depend on the choice of discretization used.
\end{inparaenum}
The remainder of this paper is organized as follows.
In Sec.~\ref{sec:motivation} a simple motivating example is presented to illustrate the main ideas of the paper.
In Sec.~\ref{sec:bayesian_decision} we present a formal definition of mode, in the framework of Bayesian decision theory, that generalizes well to infinite-dimensional spaces.
We then define the continuous-discrete estimation problems in Secs.~\ref{sec:probability_tubes}--\ref{sec:continuous-discrete_map} and the discretized problems in Sec.~\ref{seq:discretized_map}.
Finally, in Sec.~\ref{sec:limit_discretized_problems} the hypo-convergence of the problems is proved.
Simulated examples are presented in Sec.~\ref{sec:simulated_experiments} and the conclusions and future work are presented in Sec.~\ref{sec:conclusions}.

\section{Motivating example}
\label{sec:motivation}
Take the simple scalar SDE below, for which the exact transition probability is known \citep{daum1986efd}:
\begin{equation}
  \label{eq:benes_sde}
  \dd X_t = \tanh(X_t)\,\dd t + \dd W_t.
\end{equation}
Then, consider a state path estimation problem where the only information available is the initial state prior distribution and a single measurement at $t=5$ with value $y=\np{1.5}$.
The initial state and measurement are normally distributed with mean and covariance as follows:
\begin{align}
  X_0&\sim\normald{0,\, 0.16}, & 
  Y| X_5 &\sim \normald{X_5,\, 0.16}.
\end{align}

If we discretize this SDE at $N$ equally spaced time-points using the exact transition density, the Euler scheme and the trapezoidal scheme, it can be seen that the MAP state path estimates converge to continuous-time functions as $N\to\infty$.
The estimates with the trapezoidal and exact transition densities have the same limit, which is the MAP state path obtained using the Onsager--Machlup functional.
\emph{The estimates with the Euler discretization, however, converge to a different curve, which is the minimum energy state path.}
These results are summarized in Fig.~\ref{fig:benes_estimators}.
The maximization problems corresponding to each solution are presented in Secs.~\ref{sec:continuous-discrete_map}--\ref{seq:discretized_map}.

This example shows that, for some systems, the Euler discretization is not appropriate for approximating the joint density of the state path, even with very fine discretizations.
The MAP state paths obtained with the Euler scheme converge to the minimum energy estimates, not to the true MAP state paths obtained with the exact transition density or the Onsager--Machlup functional.

\section{Bayesian decision and MAP estimation}
\label{sec:bayesian_decision}
This section is a short introduction to Bayesian estimation theory to argue that the maximization of the probability of tubes of vanishing radius is a natural extension of the concept of MAP estimation to continuous-time stochastic processes.
We also formalize the concept with a clear mathematical definition that takes multimodality into account.

To begin, let $(\Omega, \events, P)$ be the probability space on which all random variables are defined.
Random variables will be denoted by uppercase letters and their values by lowercase, so that if $X\colon\Omega\to\xspace$ is a $\xspace$-valued random variable, $x\in\xspace$ will be used to refer to specific values it might take.
The same applies for stochastic processes.
The dependency on the random outcome $\omega\in\Omega$ will be omitted when unambiguous, to simplify the notation.

Bayesian estimation can be formulated as taking a decision which minimizes an expected loss, given an observation.
Say $X$ is a random variable, which is to be estimated, and $\aset$ is an observed event, summarizing all the information available.
The Bayesian choice \citep{robert2001bc} for the estimate $\hat x$ corresponds to a value which minimizes of the expected loss, i.e., 
\begin{equation}
  \label{eq:bayesian_loss}
  \textstyle
  \E{\ell(X, \hat x)\cond \aset} = \inf_x\E{\ell(X, x)\cond \aset},
\end{equation}
where $\ell$ is an $\reals$-valued function which represents the loss associated with each outcome of $X$ and choice for $x$.

Several known loss functions correspond to certain statistics of the posterior distribution.
When the loss is the absolute distance and squared distance between the estimate and the outcome of the random variable,
\begin{align}
  \ell_1(X,x)&:=\norm{X-x}, &
  \ell_2(X,x)&:=\norm{X-x}^2\!,
\end{align}
then the Bayesian estimator is the posterior median and mean, respectively.
For discrete random variables, i.e., when the image of $X$ is a countable set, the posterior modes are the Bayesian estimates associated with the 0--1 loss function, given by
\begin{equation}
  \label{eq:0_1_loss_discrete_rv}
  \ell_{01}(X,x):=
  0 \text{ if }X=x\text{, }1\text{ otherwise}.
\end{equation}
For $\reals^n$-valued random variables which admit continuous posterior densities, the expected value of $\ell_{01}$ is always unity, so we must instead consider a family of 0--1 loss functions given by
\begin{equation}
  \label{eq:0_1_loss_defn}
  \ell_\epsilon(X,x):=
  0 \text{ if }\norm{X - x} \leq \epsilon\text{, }1\text{ otherwise}.
\end{equation}
One possible definition for the MAP estimate is then the limit of Bayesian estimates $\hat x_\epsilon$ associated with the losses $\ell_\epsilon$, as $\epsilon\downto 0$ \citep[Sec.~4.1.2]{robert2001bc}.
It can be shown that the limit of any convergent subsequence of $\hat x_\epsilon$ is always a maximum of the posterior density.

However, the above definition for MAP estimate has several problems.
First, convergence of the Bayesian estimates is hard to guarantee.
Even the existence of convergent subsequences requires additional assumptions on the distribution of the random variable.
In addition, there can exist maxima of posterior density that cannot be limits of the Bayesian estimates.
These limitations are often overlooked because the \emph{de facto} definition of MAP estimates is the location of the maxima of the posterior density, i.e., the modes of the posterior distribution.
For random variables in infinite-dimensional spaces, for which there is no probability density in the usual sense, this \emph{de facto} definition cannot be used.
To overcome the problems mentioned above, we present below a formal definition of mode which agrees with the \emph{de facto} one and generalizes well to infinite-dimensional spaces.

\begin{defn}[mode]
  \label{th:mode_defn}
  Let $\xspace$ be a normed vector space and $X$ be a $\xspace$-valued random variable.
  An element $\hat x\in\xspace$ is a mode of $X$ if, for all $x\in\xspace$,
  \begin{equation}
    \label{eq:mode_defn}
    \limsup_{\epsilon\downto 0}
    \frac{\Prob{\norm{X-x}\leq \epsilon}}
    {\Prob{\norm{X-\hat x}\leq \epsilon}} \leq 1.
  \end{equation}
\end{defn}

\begin{defn}
  \label{th:map_defn}
  The maximum \emph{a posteriori} (MAP) estimates of a random variable are the modes of its posterior distribution, according to Definition \ref{th:mode_defn}.
\end{defn}

Definitions \ref{th:mode_defn} and \ref{th:map_defn} can be interpreted in the framework of Bayesian decision theory because
\begin{equation}
  \E{\ell_\epsilon(X, x)\cond \aset} = 
  1 - \Prob{\norm{X - x}\leq \epsilon \cond \aset}.
\end{equation}
Therefore, if $\hat x$ is a MAP estimate and $x$ is not, there exists an $\varepsilon\in\realspos$ such that, for all $\epsilon < \varepsilon$, the expected $\ell_\epsilon$ loss associated with $\hat x$ is no greater than that associated with $x$.
If two points are MAP estimates, then the ratio of the expected $\ell_\epsilon$ losses associated with them converges to unity.
Furtheremore, if $X$ is an $\reals^n$-valued random variable that admits a continuous density $p_X$, then for all $a,b\in\reals^n$ with $p_X(b)>0$
\begin{equation}
  \label{eq:ratio_densities}
  \lim_{\epsilon\downto 0}
  \frac{\Prob{\norm{X-a}\leq \epsilon}}{\Prob{\norm{X-b}\leq \epsilon}} =
  \frac{p_X(a)}{p_X(b)},
\end{equation}
so a point $x\in\reals^n$ is a MAP estimate if and only if it maximizes the density.
Consequently, Definition \ref{th:mode_defn} agrees with the \emph{de facto} definition of mode.

Having formalized the definition of mode for general random variables, we can now show how the Onsager--Machlup functional can be used to obtain the mode of the state paths of systems described by SDEs.

\section{Probability of tubes around state paths}
\label{sec:probability_tubes}
Let $X_t$ be an $\reals^n$-valued stochastic process representing the state of a dynamic system at the instant $t$ over the time interval $\tspace:=[0,T]$.
The evolution of $X$ is given by the SDE
\begin{equation}
  \label{eq:x_sde}
  \dd X_t = f(t,X_t)\,\dd t + G\,\dd W_t,
\end{equation}
where $f\colon\reals\times\reals^n\to\reals^n$ is the drift function, $G\in\reals^{n\times n}$ is the diffusion matrix and $W_t$ is an $n$-dimensional Wiener process which represents the process noise.

We now make some basic assumptions about the state process and the functions which define its dynamics.
A function will be said to be of class $C^n_b$ if it is $n$ times differentiable and all its derivatives up to degree $n$, including the zeroth, are bounded and continuous.

\needspace{2\baselineskip}
\begin{assum}[state process]
  \label{th:system_dynamics_assumptions}
  \hfill\vspace{-2ex}
  \begin{enumerate}[a.]
  \item\label{it:drift_c2b} The drift $f$ is continuous with respect to both arguments and is of class $C^2_b$ with respect to its second argument $x$, uniformly over its first argument $t$.
  \item\label{it:diffusion_rank} The diffusion matrix $G$ has full rank.
  \item\label{it:initial_state_wiener_process_independence} The initial state $X_0$ is independent of the Wiener process $W$.
  \item\label{it:initial_density} The initial state $X_0$ admits a continuous density $\nu(x_0)$ of class $C^0_b$
  \end{enumerate}
\end{assum}

Together, these assumptions allow the calculation of the asymptotic probability of $\epsilon$-tubes of the state process, also called $\epsilon$-sausages.
The $\epsilon$-tube around a given $\phi\colon\tspace\to\reals^n$ corresponds to all curves which lie within a distance $\epsilon$ of $\phi$, with respect to the supremum norm.
We denote by $\etube\in\events$ the event that the state process is contained within the $\epsilon$-tube around $\phi$, i.e.,
\begin{equation}
  \label{eq:tube_defn}
  \etube := \textstyle
  \setb{\omega\in\Omega}{
    \sup_{t\in\tspace} \norm{X_t(\omega) - \phi(t)} \leq \epsilon
  },
\end{equation}
where, for elements of $\reals^n$, $\norm{\cdot}$ denotes the Euclidean norm.
The probability of $\etube$ is also referred to as the sojourn probability around $\phi$.
The tube events define regions of the $\tspace\times\reals^n$ space, as illustrated in Fig.~\ref{fig:epsilon_tubes}, and an outcome $\omega\in\Omega$ belongs to the tube event if all of its pairs $\big(t,X_t(\omega)\big)$ lie in the shaded region.

Of special interest are the tubes around absolutely continuous functions with square-integrable derivatives, i.e., the Cameron--Martin space $\etaspace$ of the law of the Wiener process:
\begin{equation}
  \label{eq:cameron_martin_defn}
  \etaspace :=
  \setb{(x\colon\tspace\to\reals^n)}{
    x\text{ abs.\ continuous, }\dot x \in L^2(\tspace)
  }.
\end{equation}
For any such path $\phi\in\etaspace$, the Onsager--Machlup functional is defined as
\begin{align}
  \label{eq:onsager_machlup_functional}
  \textstyle
  J(\phi) := -\frac{1}{2}\int_0^T 
  \!\big[\normnm[Q]{\dot \phi(t) - f\big(t,\phi(t)\big)}^2 
  + \diverg f\big(t,\phi(t)\big)\big]\dd t,
\end{align}
where $Q:=(GG\trans)\inv$, $\norm[A]{x}:= \sqrt{x\trans Ax}$ and $\diverg f$ is the divergence of the drift vector field.
The importance of this functional is outlined in the theorem below.

\begin{thm}[\citet{fujita1982omf}]
  \label{th:onsager_machlup_limit}
  Consider a system \eqref{eq:x_sde} with state process satisfying Assumption \ref{th:system_dynamics_assumptions}.
  Then, for all $\phi,\varphi\in\etaspace$ with $\nu\fparen{\varphi(0)}>0$,
  \begin{equation}
    \label{eq:onsager_machlup_ratio}
    \lim_{\epsilon\downto 0}
    \frac{P(\etube)}{P(\etube[\varphi])}
    = \frac{
      \exp\fparen{\big.J(\phi)}\nu\fparen{\big.\phi(0)}
    }{
      \exp\fparen{\big.J(\varphi)}\nu\fparen{\big.\varphi(0)}
    }.
  \end{equation}
  where $J$ is the Onsager--Machlup functional defined in \eqref{eq:onsager_machlup_functional} and $\nu$ is the initial state density satisfying Assumption \ref{th:system_dynamics_assumptions}\ref{it:initial_density}.
\end{thm}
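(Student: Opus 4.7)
The plan is to follow the classical Girsanov-reduction strategy underlying Fujita's original proof. First, I would condition on the initial state by writing $P(\etube) = \int_{\norm{x_0-\phi(0)}\leq\epsilon}\nu(x_0)\,P(\etube\mid X_0=x_0)\,\dd x_0$; by continuity of $\nu$ at $\phi(0)$, the density is uniformly close to $\nu(\phi(0))$ over the integration domain, producing a factor of $\nu(\phi(0))$ times a purely geometric $\epsilon$-dependent factor that cancels against the analogous one for $\varphi$ when the ratio \eqref{eq:onsager_machlup_ratio} is formed.

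Next, for each fixed starting point $x_0$ in this $\epsilon$-ball, I would apply Girsanov's theorem to pass to a probability $\tilde P$ under which the centred deviation $Y_t := X_t - \phi(t)$ is $G$ times a standard Brownian motion. The conditional tube probability factors as $\tilde P(\etube)$ times the expectation under $\tilde P$ of the Radon--Nikodym derivative $\dd P/\dd \tilde P$ conditioned on $\etube$. The first factor depends only on $G$, $T$ and $\epsilon$, not on $\phi$, and therefore cancels in the ratio. Inside the Girsanov exponent, the Cameron--Martin quadratic contributes $-\tfrac{1}{2}\int_0^T \normnm[Q]{\dot\phi(t) - f(t,\phi(t)+Y_t)}^2\,\dd t$, which, since $Y_t\to 0$ uniformly on the tube and $f$ is continuous, converges to the first term of $J(\phi)$.

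The subtle contribution comes from the It\=o integral $\int_0^T\paren{\dot\phi - f(t,\phi+Y)}\trans QG\,\dd W_t$ in the Girsanov exponent. Taylor-expanding $f(t,\phi+Y) = f(t,\phi) + \nabla_x f(t,\phi)Y + O(\norm{Y}^2)$ and applying It\=o's formula to the term linear in $Y$ converts it into a boundary term, a Riemann integral, and the It\=o correction $\tfrac{1}{2}\int_0^T\tr\paren{\nabla_x f(t,\phi)\cdot QGG\trans}\dd t = \tfrac{1}{2}\int_0^T \diverg f(t,\phi)\,\dd t$, which accounts for the missing correction term in $J(\phi)$. Assembling the pieces and forming the ratio gives the target $\nu(\phi(0))\exp(J(\phi))/[\nu(\varphi(0))\exp(J(\varphi))]$. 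The main obstacle, and the bulk of the rigorous work, is showing that the pure stochastic integrals, the higher-order Taylor remainders, and the boundary terms vanish in probability conditional on $\etube$ as $\epsilon\downto 0$; this requires the $C^2_b$ regularity of $f$ from Assumption~\ref{th:system_dynamics_assumptions}\ref{it:drift_c2b}, uniform moment bounds on $Y_t$ under tube conditioning, and Fernique-type tail estimates for conditioned Brownian functionals as carried out in detail by Fujita.
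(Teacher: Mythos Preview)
The paper does not supply its own proof of this theorem: it is stated with attribution to \citet{fujita1982omf} and used as a black box, so there is no in-paper argument to compare against. Your sketch is a faithful outline of the classical Girsanov--It\=o approach that Fujita (and later Zeitouni, Capitaine, and others) use, including the key point that the drift-divergence term emerges from the It\=o correction when the linearized stochastic integral is converted via It\=o's formula; in that sense you have correctly identified both the strategy and the main technical hurdle (uniform control of the remainder terms conditional on the shrinking tube), but this goes well beyond what the present paper attempts.
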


\begin{figure}
  \centering
  \inputtikzpicture{epsilon_tubes}
  \caption{Illustration of the tubes corresponding to $\etube$ and $\etube[\varphi]$.}
  \label{fig:epsilon_tubes}
\end{figure}

Comparing \eqref{eq:onsager_machlup_ratio} to \eqref{eq:ratio_densities}, Thm.~\ref{th:onsager_machlup_limit} implies that, for the purpose of obtaining the mode, the product of the exponential of the Onsager--Machlup functional and the initial state density plays a role analogous to a probability density.
For this reason, these terms are sometimes referred to as a \emph{fictitious} density of state paths \citep{zeitouni1989omf}.
Also note that \eqref{eq:onsager_machlup_ratio} is valid for tubes with respect to other norms of function spaces besides the supremum norm, and that the limit does not depend on the norm used \citep{capitaine1995omf}.

For tubes around $\gamma\notin\etaspace$, outside the Cameron--Martin space, we assume that their probability decays much faster than that of tubes around $\varphi\in\etaspace$ that belong to the Cameron--Martin space, i.e., $\lim_{\epsilon\downto 0} \nicefrac{P(\etube[\gamma])}{P(\etube[\varphi])} = 0$.
This implies that the mode of the state path, according to Def.~\ref{th:mode_defn}, lies in $\etaspace$ and can be obtained by maximizing the denominator of the right-hand side of \eqref{eq:onsager_machlup_ratio}.
This assumption is implicit whenever the Onsager--Machlup functional is used to obtain the mode of state paths \citep{zeitouni1987map, aihara1999mem, aihara1999mln}.

Whenever the divergence of the drift vector field does not depend on the state $x$, then the last term of the integral in \eqref{eq:onsager_machlup_functional} can be dropped from the Onsager--Machlup functional, leading to the energy functional.
Even for systems with state-dependent drift divergence, the energy functional gives the asymptotic probability of $\epsilon$-tubes around the path of the Wiener process associated with the state path, as is shown in the next section.

\section{Probability of tubes around noise paths}

The Wiener process is also a diffusion process for which the Onsager--Machlup functional can be used for calculating the asymptotic probability of $\epsilon$-tubes.
For any $\eta\colon\tspace\to\reals^n$ with $\eta(0)=0$ and $x_0\in\reals^n$, let $\entube\in\events$ represent the event that the initial state and the Wiener process lie within an $\epsilon$ radius of $x_0$ and $\eta$, i.e.,
\begin{multline}
  \label{eq:noise_tube_defn}
  \entube :=
  \textstyle
  \{\omega\in\Omega\colon
  \sup_{t\in\tspace} \norm{W_t(\omega) - \eta(t)} \leq \epsilon,\\
  \norm{X_0(\omega) - x_0}\leq\epsilon\}.
\end{multline}
Applying a variant of Thm.~\ref{th:onsager_machlup_limit} which considers diffusions with a fixed initial condition \citep[cf.][]{fujita1982omf}, we have that, for any $\zeta,\eta\in\etaspace$ and $\chi_0,x_0\in\reals^n$ such that $\zeta(0)=\eta(0)=0$ and the initial state density $\nu(x_0)>0$,
\begin{equation}
  \label{eq:energy_functional_ratio}
  \lim_{\epsilon\downto 0}
  \frac{
    P(\entube[\zeta,\chi_0])
  }{
    P(\entube)
  }
  = \frac{
    \exp\fparen{-\frac{1}{2}\int_0^T\dot \zeta(t)\trans\dot\zeta(t)\dd t}
    \nu(\chi_0)
  }{
    \exp\fparen{-\frac{1}{2}\int_0^T\dot \eta(t)\trans\dot\eta(t)\dd t}
    \nu(x_0)
  }.
\end{equation}

To relate the sojourn probability of state paths and noise paths, we now show that the event corresponding to each noise path tube is contained in an event corresponding to a state path tube.

\begin{lem}
  \label{th:noise_state_tube_equivalency}
  If the system satisfies Assumption \ref{th:system_dynamics_assumptions}, then there exists a constant $c_1\in\reals$ such that $\entube[\eta,\phi(0)]\subset\mathcal{B}^{c_1\epsilon}_\phi$ for all $\epsilon\in\reals$ and $\eta,\phi\in\etaspace$ such that $\eta(0)=0$ and
  \begin{equation}
    \label{eq:associated_ode}
    \dot\phi(t) = f\fparen{\big. t, \phi(t)} + G\dot\eta(t).
  \end{equation}
\end{lem}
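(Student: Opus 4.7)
The plan is to control the difference $\delta(t) := X_t(\omega) - \phi(t)$ pathwise for every $\omega\in\entube[\eta,\phi(0)]$ and show that $\sup_t \norm{\delta(t)}$ is bounded by a constant multiple of $\epsilon$. Subtracting the ODE \eqref{eq:associated_ode} from the integral form of the SDE \eqref{eq:x_sde} yields, for every $t\in\tspace$,
\begin{equation}
  \delta(t) = \delta(0) + \int_0^t \bigl[f\fparen{s,X_s(\omega)} - f\fparen{s,\phi(s)}\bigr]\dd s + G\bigl[W_t(\omega) - \eta(t)\bigr],
\end{equation}
where I have used that $\eta(0)=0$ so no constant term appears from the Wiener/noise integration. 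This is the basic identity all the estimates will rest on.

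Next I would bound each term on the right-hand side using the hypothesis $\omega\in\entube[\eta,\phi(0)]$. By the very definition \eqref{eq:noise_tube_defn}, one has $\norm{\delta(0)}=\norm{X_0(\omega)-\phi(0)}\leq\epsilon$ and $\sup_t\norm{W_t(\omega)-\eta(t)}\leq\epsilon$, so $\norm{G(W_t(\omega)-\eta(t))}\leq \normnm{G}\,\epsilon$, where $\normnm{G}$ denotes the operator norm of $G$. For the drift term I would invoke Assumption \ref{th:system_dynamics_assumptions}\ref{it:drift_c2b}: since $f$ is $C^2_b$ in $x$ uniformly in $t$, in particular it is globally Lipschitz in $x$ with some constant $L\in\realspos$, so $\norm{f(s,X_s(\omega))-f(s,\phi(s))}\leq L\,\norm{\delta(s)}$. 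Combining these bounds gives
\begin{equation}
  \norm{\delta(t)} \leq (1+\normnm{G})\,\epsilon + L\int_0^t \norm{\delta(s)}\,\dd s.
\end{equation}

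Applying Grönwall's inequality then yields $\norm{\delta(t)}\leq (1+\normnm{G})\,e^{LT}\,\epsilon$ for every $t\in\tspace$, so the constant $c_1 := (1+\normnm{G})\,e^{LT}$ depends only on the system data and on $T$, not on $\epsilon$, $\eta$, or $\phi$. Since this bound is uniform in $t$, we conclude $\sup_t \norm{X_t(\omega)-\phi(t)} \leq c_1\epsilon$, i.e.\ $\omega\in\etube$ with radius $c_1\epsilon$, which is exactly the claimed inclusion $\entube[\eta,\phi(0)]\subset\mathcal{B}^{c_1\epsilon}_\phi$.

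The whole argument is mostly routine once the right identity is written down; the one point that deserves care is ensuring the Lipschitz constant $L$ is genuinely global and uniform in $t$, which is where Assumption \ref{th:system_dynamics_assumptions}\ref{it:drift_c2b} (bounded derivatives, uniformly in $t$) is essential — without that, a pathwise Grönwall bound would only give a local-in-time statement and the constant $c_1$ could depend on $\phi$ through the sup of $\norm{\phi(t)}$. Note the argument never uses assumption \ref{th:system_dynamics_assumptions}\ref{it:diffusion_rank} or \ref{it:initial_density}, consistent with the fact that the lemma is a deterministic pathwise containment statement.
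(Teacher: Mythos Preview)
Your proof is correct and follows essentially the same route as the paper: subtract the integral form of the ODE \eqref{eq:associated_ode} from that of the SDE \eqref{eq:x_sde}, bound the initial-condition and noise terms by $(1+\normnm{G})\epsilon$ using membership in $\entube[\eta,\phi(0)]$, use the global Lipschitz property of $f$ coming from Assumption~\ref{th:system_dynamics_assumptions}\ref{it:drift_c2b}, and finish with Gr\"onwall. The constant you obtain, $c_1=(1+\normnm{G})e^{LT}$, matches the paper's $c_2\exp(TK)$ with $c_2=1+\normnm{G}$.
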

\begin{proof}
  The SDE \eqref{eq:x_sde} can be written in the integral form as
  \begin{equation}
    \label{eq:sde_integral_form}
    \textstyle
    X_t = X_0 + \int_0^tf(t,X_t)\,\dd t + GW_t.
  \end{equation}
  Writing the associated ordinary differential equation (ODE)
  \eqref{eq:associated_ode} in the same form and taking the difference, we
  have that
  \begin{multline}
    \norm{X_t - \phi(t)} \leq \norm{X_0 - \phi(0)}
    + \norm{GW_t -G\eta(t)}\\
    \textstyle
    + \int_0^t\norm{f(t, X_t) - f\fparen{\big. t, \phi(t)}} \dd t.
  \end{multline}
  Since the Jacobian matrix of $f$ is assumed to be bounded, $f$ is Lipschitz continuous, i.e., there exist $K\in\realspos$ such that
  \begin{equation}
    \label{eq:lipschitz_condition}
    \norm{f(t,a) -f(t, b)} \leq K\norm{a-b} 
  \quad \forall t\in\tspace \text{ and }a,b\in\reals^n.
  \end{equation}
  Defining $c_2:=1+\norm{G}$, where the norm of $G$ is induced by the Euclidean norm, we have that, for all $\omega\in\entube[\eta,\phi(0)]$,
  \begin{equation}
    \textstyle
    \norm{X_t - \phi(t)} \leq c_2\epsilon
    + \int_0^t K\norm{X_t - \phi(t)} \dd t.
  \end{equation}
  Using the Bellman--Grönwall inequality we then have that
  \begin{equation}
    \norm{X_t - \phi(t)} \leq \epsilon c_2 \exp(TK).
    \qedhere
  \end{equation}
\end{proof}

Lem.~\ref{th:noise_state_tube_equivalency} means that every state path in $\etaspace$ is associated with a noise path in $\etaspace$ and a initial state through the associated ODE \eqref{eq:associated_ode}.
Rewriting \eqref{eq:energy_functional_ratio} in terms of the state path $\phi$ associated with the noise path $\eta$, we note that
\begin{equation}
  -\tfrac{1}{2}\dot\eta(t)\trans\dot\eta(t) = 
  -\tfrac{1}{2}
  \big\lVert \dot\phi(t) - f\fparen{\big. t, \phi(t)}\!\big\rVert_Q^2,
\end{equation}
which leads to the energy functional:
\begin{equation}
  \label{eq:energy_functional_defn}
  \textstyle
  J_\energy(\phi) := -\frac{1}{2}\int_0^T 
  \big\lVert\dot\phi(t) - f\fparen{\big. t, \phi(t)}\!\big\rVert_Q^2\dd t,
\end{equation}
so called because it corresponds to the energy of the noise signal associated with each state path.
The energy functional is the Onsager--Machlup functional without the drift divergence term, which accounts for the amplification of noise by the system dynamics.

Next, we show how to incorporate the measurement likelihood so that the asymptotic \emph{posterior} sojourn probability can be obtained and, with it, the MAP estimator.

\section{Continuous-discrete MAP estimation}
\label{sec:continuous-discrete_map}
Let the finite set $\tmeas\subset\tspace$ be the time instants where measurements are taken.
In addition, let the $\reals^{m_t}$-valued random variables $Y_t$ denote the measurements taken at each $t\in\tmeas$, with the dimension of the measurement vector $m_t$ possibly varying across different instants.
We then make the following assumptions.

\begin{assum}[measurements]
  \label{th:measurement_assum}
  \hfill\vspace{-2ex}
  \begin{enumerate}[a.]
  \item \label{it:measurement_conditional_independence} Each measurement $Y_t$ is conditionally independent, given the state $X_t$ at the measurement instant, of all the states $X_\tau$ and measurements $Y_\tau$ at the remaining time instants $\tau\neq t$.
  \item Each measurement $Y_t$ admits a conditional density $\psi_t(y|x)$, given the state $X_t$, which is of class $C^0_b$ with respect to its second argument.
  \item All measured values $y_t$ have a nonzero predictive prior density, i.e., $\E{\psi_t(y_t|X_t)}>0$.
  \end{enumerate}
\end{assum}

Given a sequence $y_t$ of measurement values, we wish to obtain the mode of the state path conditioned on
$\yset := \setb{\omega\in\Omega}{Y_t(\omega) = y_t,\forall t\in\tmeas}$.
We then have the following result.

\begin{thm}
  \label{th:posterior_tube_ratio}
  Let $\phi,\varphi\in\etaspace$ with $\psi_t\fparen{\big.y_t\cond\varphi(t)}>0$ for all $t\in\tmeas$.
  In addition, let $\aset_1^\epsilon$ and $\aset_2^\epsilon$ be two families of events measurable with respect to the $\sigma$-algebra generated by $X$ such that $\aset_1^\epsilon\subset\etube$, $\aset_2^\epsilon\subset\etube[\varphi]$ and $\nicefrac{P(\aset_1^\epsilon)}{P(\aset_2^\epsilon)}$ converges as $\epsilon\downto 0$.
  Then, for a system with state process satisfying Assumption \ref{th:system_dynamics_assumptions} and measurements satisfying Assumption \ref{th:measurement_assum},
  \begin{equation}
    \label{eq:posterior_tube_ratio}
    \lim_{\epsilon\downto 0}
    \frac{P(\aset_1^\epsilon|\yset)}{P(\aset_2^\epsilon|\yset)}
    = 
    \frac{
      \prod_{t\in\tmeas} \psi_t\fparen{\big.y_t\cond \phi(t)}
    }{
      \prod_{t\in\tmeas} \psi_t\fparen{\big.y_t\cond \varphi(t)}
    }
    \lim_{\epsilon\downto 0}\frac{P(\aset_1^\epsilon)}{P(\aset_2^\epsilon)}.
  \end{equation}
\end{thm}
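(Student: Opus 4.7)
The plan is to reduce the posterior tube ratio to the product of a likelihood ratio (depending only on the point values of $\psi_t$ along $\phi$ and $\varphi$) and the prior tube ratio, whose limit is assumed. First, I would appeal to the density form of Bayes' rule: because $(Y_t)_{t\in\tmeas}$ admits the joint conditional density $\prod_{t\in\tmeas}\psi_t(y_t\,|\,X_t)$ given $X$ by the conditional independence and marginal density parts of Assumption~\ref{th:measurement_assum}, and because the resulting predictive density at $y_{\tmeas}$ is strictly positive, the regular conditional probability given $\yset$ satisfies
\begin{equation}
P(\aset\,|\,\yset) = \frac{\E{\prod_{t\in\tmeas}\psi_t(y_t\,|\,X_t)\,\mathds{1}_{\aset}}}{\E{\prod_{t\in\tmeas}\psi_t(y_t\,|\,X_t)}}
\end{equation}
for every event $\aset\in\sigma(X)$. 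Applying this to $\aset_1^\epsilon$ and $\aset_2^\epsilon$ and taking the ratio cancels the common normalizer, reducing everything to controlling the two numerators.

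Next, I would replace $\psi_t(y_t\,|\,X_t)$ by its value at the reference path on each tube. Since $\aset_1^\epsilon\subset\etube$ forces $\norm{X_t(\omega)-\phi(t)}\leq\epsilon$ for each of the finitely many $t\in\tmeas$, and each $\psi_t(y_t\,|\,\cdot)$ is bounded and continuous, the product $\prod_{t\in\tmeas}\psi_t(y_t\,|\,X_t(\omega))$ converges uniformly in $\omega\in\aset_1^\epsilon$ to $\prod_{t\in\tmeas}\psi_t\fparen{y_t\,|\,\phi(t)}$ as $\epsilon\downto 0$, with an analogous statement on $\aset_2^\epsilon$ for $\varphi$. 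Boundedness of the $\psi_t$ lets me pull this uniform approximation out of the expectation:
\begin{equation}
\E{\prod_{t\in\tmeas}\psi_t(y_t\,|\,X_t)\,\mathds{1}_{\aset_i^\epsilon}} = \Big[\prod_{t\in\tmeas}\psi_t\fparen{y_t\,|\,\phi_i(t)} + o(1)\Big]P(\aset_i^\epsilon),
\end{equation}
with $\phi_1:=\phi$ and $\phi_2:=\varphi$. Dividing the $i=1$ expression by the $i=2$ one, the hypothesis $\psi_t\fparen{y_t\,|\,\varphi(t)}>0$ keeps the likelihood denominator bounded away from zero for small $\epsilon$, the assumed convergence of $P(\aset_1^\epsilon)/P(\aset_2^\epsilon)$ handles the prior ratio, and passing to the limit yields exactly \eqref{eq:posterior_tube_ratio}.

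The step I expect to be the most delicate is justifying the density form of Bayes' rule, because $\yset$ is a null event under the joint law; $P(\,\cdot\,|\,\yset)$ must be interpreted as a regular conditional probability arising from the disintegration of $Y_{\tmeas}$ given $X$. This is standard under the stated existence and positivity of the $\psi_t$, but it must be invoked explicitly rather than handled by formal density manipulations. Beyond that, the remainder is a routine uniform-continuity argument over the finite index set $\tmeas$.
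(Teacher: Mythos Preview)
Your proof is correct and follows essentially the same route as the paper: apply the density form of Bayes' rule to express $P(\aset_i^\epsilon\mid\yset)$ as a likelihood-weighted prior probability, use continuity of each $\psi_t(y_t\mid\cdot)$ together with the tube containment $\aset_i^\epsilon\subset\etube[\phi_i]$ to replace the conditional likelihood expectation by its value on the reference path, and then take the ratio so the normalizer cancels. Your version is in fact slightly more careful than the paper's---you keep $\prod_{t\in\tmeas}\psi_t$ inside the expectation rather than factoring it, and you explicitly flag the need to interpret $P(\cdot\mid\yset)$ as a regular conditional probability---but the argument is the same.
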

\begin{proof}
  Using Bayes' rule, the posterior probability of any event $\aset$ measurable with respect to the $\sigma$-algebra generated by $X$ is
\begin{equation}
  \label{eq:posterior_xevent_probability}
  P(\aset|\yset) =
  \frac{
    P(\aset) \prod_{t\in\tmeas}\E{\psi_t(y_t|X_t)\cond\aset}
  }{\prod_{t\in\tmeas}\E{\psi_t(y_t|X_t)}}.
\end{equation}
Due to the continuity of the measurement likelihood $\psi_t$ with respect to its second argument, we have that for all $t\in\tmeas$,
\begin{equation}
  \label{eq:likelihood_limit}
  \lim_{\epsilon\downto 0}\E{\psi_t(y_t|X_t)\cond\aset_1^\epsilon} =
  \psi_t\fparen{y_t\cond \phi(t)\big.},
\end{equation}
and analogously for $\aset_2^\epsilon$ and $\varphi$.
Substituting \eqref{eq:posterior_xevent_probability} and \eqref{eq:likelihood_limit} into the left-hand side of \eqref{eq:posterior_tube_ratio} we obtain the result on the right-hand side of the equation.
\end{proof}

The MAP state path estimation problem then follows from Theorem~\ref{th:posterior_tube_ratio}, by setting $\aset^\epsilon = \etube$.
We will use the logarithm of the asymptotic sojourn probability because it is more tractable numerically and simplifies some convergence proofs.
As some densities can be equal to zero, the resulting merit functions will take values on the extended real number line $\ereals:=\reals\cup\{-\infty,\infty\}$.

\begin{lem}
  \label{th:cd_map_problem}
  The continuous-discrete MAP state paths are curves $\phi\in\etaspace$ that maximize the MAP merit $H\colon\etaspace\to\ereals$ given by
  \mathtoolsset{showonlyrefs=false}
  \begin{align}
    \label{eq:posterior_map_merit}
    \textstyle
    H(\phi) := 
    J(\phi) + \ln\nu\fparen{\big.\phi(0)} +
    \sum_{t\in\tmeas} \ln\psi_t\fparen{\big.y_t\cond \phi(t)}.
  \end{align}
  \mathtoolsset{showonlyrefs=true}
\end{lem}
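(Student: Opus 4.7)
The plan is to apply Definition~\ref{th:map_defn} directly, turning the asymptotic ratio of posterior tube probabilities into a difference of MAP merits via Thms.~\ref{th:onsager_machlup_limit} and~\ref{th:posterior_tube_ratio}. First I fix a reference curve $\hat\phi\in\etaspace$ for which $H(\hat\phi)$ is finite, i.e., with $\nu\paren{\hat\phi(0)}>0$ and $\psi_t\paren{y_t|\hat\phi(t)}>0$ for every $t\in\tmeas$; if no such curve existed, the claim would be vacuous, so assume one is available.

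For any other $\phi\in\etaspace$, Thm.~\ref{th:onsager_machlup_limit} supplies
\begin{equation*}
  \lim_{\epsilon\downto 0}\frac{P(\etube)}{P(\etube[\hat\phi])}
  = \exp\paren{J(\phi)-J(\hat\phi)}\frac{\nu\paren{\phi(0)}}{\nu\paren{\hat\phi(0)}},
\end{equation*}
which in particular verifies the convergence hypothesis of Thm.~\ref{th:posterior_tube_ratio} with $\aset_1^\epsilon=\etube$ and $\aset_2^\epsilon=\etube[\hat\phi]$. Combining the two results yields
\begin{equation*}
  \lim_{\epsilon\downto 0}\frac{P(\etube|\yset)}{P(\etube[\hat\phi]|\yset)}
  = \exp\paren{H(\phi)-H(\hat\phi)}.
\end{equation*}
Restricted to $\etaspace$, Definition~\ref{th:mode_defn} is therefore equivalent to the inequality $H(\phi)\le H(\hat\phi)$ for every $\phi\in\etaspace$, which is exactly the maximization claimed.

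It remains to rule out modes outside $\etaspace$ and Cameron--Martin paths with $H(\phi)=-\infty$. The blanket assumption recorded after Thm.~\ref{th:onsager_machlup_limit} gives $P(\etube[\gamma])/P(\etube[\hat\phi])\to 0$ for every $\gamma\notin\etaspace$; since the likelihoods $\psi_t$ are uniformly bounded and the predictive priors $\E{\psi_t(y_t|X_t)}$ are strictly positive by Assumption~\ref{th:measurement_assum}, Bayes' rule~\eqref{eq:posterior_xevent_probability} transfers this decay to the posterior ratio. For $\phi\in\etaspace$ with $H(\phi)=-\infty$ either $\nu\paren{\phi(0)}=0$ or $\psi_t\paren{y_t|\phi(t)}=0$ at some $t$, and the same combination of Thm.~\ref{th:onsager_machlup_limit}, continuity of $\psi_t$, and~\eqref{eq:posterior_xevent_probability} drives the corresponding posterior tube ratio to zero, so~\eqref{eq:mode_defn} is trivially satisfied and such $\phi$ cannot themselves be modes.

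The main obstacle is organising this case split cleanly rather than proving any deep estimate: the two nontrivial limits needed for the Cameron--Martin comparison are precisely the content of the preceding theorems, so the remaining work is bookkeeping to show that any reference candidate $\hat\phi$ with finite merit asymptotically dominates every path with vanishing density or lying outside $\etaspace$.
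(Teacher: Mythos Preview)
Your proposal is correct and follows precisely the route the paper indicates: the paper does not give a standalone proof of this lemma but simply remarks that it ``follows from Theorem~\ref{th:posterior_tube_ratio}, by setting $\aset^\epsilon = \etube$,'' which is exactly the core of your argument via Thms.~\ref{th:onsager_machlup_limit} and~\ref{th:posterior_tube_ratio}. Your treatment is in fact more careful than the paper's, since you explicitly dispose of paths outside $\etaspace$ (invoking the blanket assumption stated after Thm.~\ref{th:onsager_machlup_limit}) and of Cameron--Martin paths with $H(\phi)=-\infty$, cases the paper leaves implicit.
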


If instead $\aset^\epsilon = \entube$, Theorem~\ref{th:posterior_tube_ratio} leads to the minimum energy estimation problem.

\begin{lem}
  \label{th:minimum_energy_problem}
  The minimum energy state paths, which are associated with the MAP noise paths, are curves $\phi\in\etaspace$ that maximize the energy merit $H_\energy\colon\etaspace\to\ereals$ given by
  \begin{align}
    \label{eq:posterior_energy_merit}
    \textstyle
    H_\energy(\phi) := 
    J_\energy(\phi) + \ln\nu\fparen{\big.\phi(0)} +
    \sum_{t\in\tmeas} \ln\psi_t\fparen{\big.y_t\cond \phi(t)}.
  \end{align}
\end{lem}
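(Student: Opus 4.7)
The plan is to mirror the derivation of Lemma \ref{th:cd_map_problem}, except that Definition \ref{th:mode_defn} is now applied to the joint variable $(X_0,W)$ conditioned on $\yset$ rather than to $X$ itself. By Assumption \ref{th:system_dynamics_assumptions} the associated ODE \eqref{eq:associated_ode} establishes a bijection between pairs $(x_0,\eta)\in\reals^n\times\etaspace$ with $\eta(0)=0$ and state paths $\phi\in\etaspace$, so a MAP noise path corresponds to a single state path, and the goal is to identify which state paths arise this way. Since Definition \ref{th:mode_defn} formulates the mode through asymptotic ratios of tube probabilities, the task reduces to computing the limit as $\epsilon\downto 0$ of $P(\entube[\zeta,\chi_0]|\yset)/P(\entube|\yset)$ for competing pairs $(\chi_0,\zeta)$.

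First I would verify that the argument used to prove Theorem \ref{th:posterior_tube_ratio} carries over to $\aset_1^\epsilon:=\entube$ and $\aset_2^\epsilon:=\entube[\zeta,\chi_0]$. Measurability of these events with respect to the $\sigma$-algebra generated by $X$ is automatic: Assumption \ref{th:system_dynamics_assumptions}\ref{it:diffusion_rank} makes $G$ invertible, so \eqref{eq:sde_integral_form} recovers $X_0$ and $W$ as functionals of $X$. The only other ingredient in Theorem \ref{th:posterior_tube_ratio} is a containment of $\aset_i^\epsilon$ inside a shrinking state tube, used to pass the measurement-likelihood limit \eqref{eq:likelihood_limit}. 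Here Lemma \ref{th:noise_state_tube_equivalency} gives $\aset_1^\epsilon\subset\mathcal{B}^{c_1\epsilon}_\phi$ and $\aset_2^\epsilon\subset\mathcal{B}^{c_1\epsilon}_\varphi$, where $\varphi$ is the state path associated with $(\chi_0,\zeta)$. Because $c_1\epsilon\downto 0$ and $\psi_t$ is continuous (Assumption \ref{th:measurement_assum}), the likelihood-limit step still produces $\psi_t\fparen{y_t\cond\phi(t)}$ and $\psi_t\fparen{y_t\cond\varphi(t)}$, and the factorization \eqref{eq:posterior_xevent_probability} yields
\[
\lim_{\epsilon\downto 0}\frac{P(\aset_2^\epsilon|\yset)}{P(\aset_1^\epsilon|\yset)}
= \frac{\prod_{t\in\tmeas}\psi_t\fparen{y_t\cond\varphi(t)}}{\prod_{t\in\tmeas}\psi_t\fparen{y_t\cond\phi(t)}}
\lim_{\epsilon\downto 0}\frac{P(\aset_2^\epsilon)}{P(\aset_1^\epsilon)}.
\]

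Second, the remaining prior ratio is given by \eqref{eq:energy_functional_ratio}. Substituting $\dot\eta(t)=G\inv\bigl(\dot\phi(t)-f(t,\phi(t))\bigr)$, and likewise for $\zeta$ and $\varphi$, converts the quadratic exponents into $\tfrac{1}{2}\normnm[Q]{\dot\phi(t)-f(t,\phi(t))}^2$ integrated over $\tspace$, so the prior exponent becomes $J_\energy(\varphi)-J_\energy(\phi)$. Combining this with the initial-density and measurement-likelihood factors gives
\[
\lim_{\epsilon\downto 0}\frac{P(\entube[\zeta,\chi_0]|\yset)}{P(\entube|\yset)}
= \exp\bigl(H_\energy(\varphi)-H_\energy(\phi)\bigr),
\]
so Definition \ref{th:mode_defn} declares $(x_0,\eta)$ a MAP noise path exactly when its associated $\phi$ maximizes $H_\energy$ over $\etaspace$.

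The main obstacle is justifying the restriction to noise paths in $\etaspace$, since \eqref{eq:energy_functional_ratio} only applies there. This parallels the tacit hypothesis made for state paths just after Theorem \ref{th:onsager_machlup_limit}: noise tubes around $\eta\notin\etaspace$ have sojourn probability decaying strictly faster than those around any $\zeta\in\etaspace$, so no mode of $(X_0,W)|\yset$ can lie outside $\etaspace$. Once that reduction is in place, the remainder is routine substitution, essentially a noise-space transcription of Lemma \ref{th:cd_map_problem}.
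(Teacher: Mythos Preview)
Your proposal is correct and follows essentially the same route as the paper: the paper simply states that setting $\aset^\epsilon=\entube$ in Theorem~\ref{th:posterior_tube_ratio} yields the minimum energy problem, and your argument spells out precisely the verifications this entails (measurability via invertibility of $G$, the tube containment from Lemma~\ref{th:noise_state_tube_equivalency}, the prior ratio from \eqref{eq:energy_functional_ratio}, and the change of variables to $J_\energy$). Your explicit remark on the $\etaspace$-restriction for noise paths matches the implicit assumption the paper flags after Theorem~\ref{th:onsager_machlup_limit}.
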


We now present the discretization schemes and their corresponding discretized MAP state path estimation problems.

\section{Discretized MAP state path estimation}
\label{seq:discretized_map}
To use discrete-time MAP state path estimation methods in systems with dynamics described by SDEs, the state transition density between points in a partition of the experiment's time interval must be known.
As the transition densities of SDEs do not have closed-form solutions for general nonlinear systems, approximations must be used.
In this work, we consider approximations used in the numerical solution of SDEs.

We start with an ordered partition
$\partition := \{t_k\}_{k=0}^N\subset\tspace$, which includes the endpoints $t_0=0$, $t_N=T$ and measurement times $\tspace_m\subset\partition$.
The discretization schemes then represent the evolution of the state at the partition points by difference equations.
For brevity when working with partition points, we will use the following notation when unambiguous: $X_{t_k}$ will be shortened to  $X_k$ and sequences like $X_{t_0},\dotsc,X_{t_N}$ will be shortened to $X_{0:N}$.
The sequence $X_{0:N}$ will be denoted the discretized state path.

Change of variables is used to obtain the discretized state path densities.
For this we need the well-known theorem below.

\begin{thm}[{\citet[Prop.~9.1]{georgii2008s}}]
  \label{th:change_densities}
  Let $A$ be an $\reals^n$-valued random variable admitting a probability density.
  If $q\colon\reals^n\to\reals^n$ is a $C^1$ diffeomorphism and the $\reals^n$-valued random variable $B$ is defined as $B := q(A)$, then $B$ admits a probability density, which is given by
  \begin{equation}
    \label{eq:change_densities}
    \pr[B]{b} = \abs{\det \nabla q\inv(b)}\pr[A]{q\inv(b)},
  \end{equation}
  where $\det\nabla q\inv(b)$ is the Jacobian determinant of the inverse of $q$, evaluated at $b$.
\end{thm}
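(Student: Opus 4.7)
The plan is to reduce the claim to the classical multivariate change-of-variables formula for Lebesgue integrals, using only the definition of a probability density and the measure-uniqueness argument. The strategy has three clearly separable steps, and the crux lies entirely in invoking the integral change-of-variables formula correctly under the hypotheses given.

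First I would recall that $A$ admitting a density $\pr[A]{\cdot}$ means that, for every Borel set $E\subset\reals^n$, $\Prob{A\in E} = \int_E \pr[A]{a}\,\dd a$, where the integration is with respect to Lebesgue measure. The goal is to show that the pushforward measure $P\circ B\inv$ of $B=q(A)$ admits the Radon--Nikodym derivative $\abs{\det\nabla q\inv(b)}\pr[A]{q\inv(b)}$ with respect to Lebesgue measure. Since two finite Borel measures on $\reals^n$ that agree on all Borel sets are equal, it suffices to verify the integral identity on an arbitrary Borel set.

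Next, for any Borel set $F\subset\reals^n$, I would use $q\inv(F)$, which is Borel because $q$ is continuous, and write
\begin{equation}
  \Prob{B\in F} = \Prob{q(A)\in F} = \Prob{A\in q\inv(F)} = \int_{q\inv(F)}\pr[A]{a}\,\dd a.
\end{equation}
The heart of the proof is then to apply the classical multivariate change-of-variables formula, substituting $a = q\inv(b)$, which requires exactly that $q$ (equivalently $q\inv$) be a $C^1$ diffeomorphism, so that $\nabla q\inv$ exists, is continuous, and is nonsingular on all of $\reals^n$. This yields
\begin{equation}
  \int_{q\inv(F)}\pr[A]{a}\,\dd a = \int_F \pr[A]{q\inv(b)}\abs{\det\nabla q\inv(b)}\,\dd b,
\end{equation}
so $\Prob{B\in F}$ equals the integral over $F$ of the claimed density. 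Taking $F$ to range over all Borel sets, the uniqueness of densities (up to a Lebesgue-null set) yields the identification in \eqref{eq:change_densities}.

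The main obstacle, such as it is, is really just the careful appeal to the integral change-of-variables theorem: one must check that the hypothesis of a $C^1$ diffeomorphism delivers both the non-vanishing of $\det\nabla q\inv$ and the measurability needed for the substitution to apply globally on $\reals^n$, rather than just on some open subset. Once that analytic fact is in hand, the probabilistic content is a one-line bookkeeping argument. Since this is a textbook result cited via \citet{georgii2008s}, I would state the proof at the level above and refer to any standard real-analysis text for the substitution formula itself rather than reproduce its derivation.
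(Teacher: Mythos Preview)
Your argument is correct and is precisely the standard textbook proof of this change-of-variables result. Note, however, that the paper does not supply its own proof of this theorem: it is stated as a well-known result and attributed to \citet[Prop.~9.1]{georgii2008s}, so there is no in-paper proof to compare against. Your write-up is exactly the kind of short derivation one would find in the cited reference.
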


In \eqref{eq:change_densities}, the Jacobian determinant represents how the diffeomorphism changes the volume element at each point.
We now present each discretization and the corresponding discretized MAP state path estimation problems.

\subsection{Euler scheme}
The stochastic Euler scheme \citep[Sec.~9.1]{kloeden1992nss}, also known as the Euler--Maruyama scheme, is one of the simplest SDE discretization schemes.
It approximates the evolution of the states by
\begin{equation}
  \label{eq:euler_maruyama_scheme}
  X_{k+1} = X_k + f(t_k,X_k)\delta_k + G\Delta W_k,
\end{equation}
where $\delta_k:= t_{k+1} - t_k$ and $\Delta W_k:=W_{k+1} - W_k$.
Recall that the increments of the Wiener process are, by definition, independent and normally distributed with zero mean and covariance $\ident\delta_k$, i.e., $\Delta W_k \sim \normald{0,\ident\delta_k}$.

From \eqref{eq:euler_maruyama_scheme} and the assumptions that $f$ is of class $C^2_b$ and $G$ is invertible, it can be seen that $X_{0:N}$ can be obtained from $X_0,\Delta W_{0:N-1}$ by a diffeomorphism.
Applying Thm.~\ref{th:change_densities}, the joint prior density of the state path over the discretization points is given by
\begin{equation}
  \label{eq:euler_state_path_prior}
  p(x_{0:N}) = \frac{
    \nu(x_0) \exp\fparen{\big.R(x_{0:N})}
  }{
    \prod_{k=0}^{N-1}\abs{\det G}\sqrt{(2\pi)^n\delta_k}
  },
\end{equation}
where $R$ is the Euler energy functional, defined as 
\begin{equation}
  \label{eq:euler_energy_functional}
  R(x_{0:N}) := 
  -\frac{1}{2} \sum_{k=0}^{N-1} \delta_k
  \norm[Q]{\frac{\Delta x_k}{\delta_k} - f(t_k,x_k)}^2.
\end{equation}
Assumption \ref{th:measurement_assum}\ref{it:measurement_conditional_independence} implies that the joint measurement likelihood, i.e., the joint density of all measurements, conditioned on the state path, is given by
\begin{equation}
  \label{eq:meas_likelihood}
  \textstyle
  p(\yset|x_{0:N}) = \prod_{t\in\tmeas} \psi_t(y_t|x_t).
\end{equation}
Applying Bayes' rule we have that, for fixed $y_t$, the posterior state path density can be obtained from \eqref{eq:euler_state_path_prior} and \eqref{eq:meas_likelihood}:
\begin{equation}
  \label{eq:posterior_state_path_density}
   p(x_{0:N}|\yset)\propto p(\yset|x_{0:N}) p(x_{0:N}).
\end{equation}
Taking the logarithm of \eqref{eq:posterior_state_path_density} and removing the constant terms, which do not influence the location of maxima, we obtain the merit function for MAP estimation.

\begin{lem}
  \label{th:euler_map_problem}
  The discretized MAP state paths under the Euler scheme are the vectors $x_{0:N}\in\reals^{Nn+n}$ which maximize the Euler merit
  \begin{align}
    \label{eq:posterior_euler_merit}
    \textstyle
    S(x_{0:N}) := 
    R(x_{0:N}) + \ln\nu\fparen{\big.x_0} +
    \sum_{t\in\tmeas} \ln\psi_t\fparen{\big.y_t\cond x_t}.
  \end{align}
\end{lem}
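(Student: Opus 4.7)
The plan is to follow the outline sketched in the paragraphs preceding the lemma, making each step explicit. First, I would verify that the map $q\colon(x_0,\Delta w_{0:N-1})\mapsto x_{0:N}$ defined recursively by the Euler scheme \eqref{eq:euler_maruyama_scheme} is a $C^1$ diffeomorphism of $\reals^{Nn+n}$ onto itself. Its inverse is given explicitly by $x_0\mapsto x_0$ and $\Delta w_k = G\inv\big(\Delta x_k - f(t_k,x_k)\delta_k\big)$ for $k=0,\dotsc,N-1$, which is well-defined since $G$ has full rank (Assumption \ref{th:system_dynamics_assumptions}\ref{it:diffusion_rank}) and is $C^1$ since $f$ is of class $C^2_b$ (Assumption \ref{th:system_dynamics_assumptions}\ref{it:drift_c2b}).

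Second, I would compute the Jacobian determinant of $q\inv$. Ordering coordinates as $(x_0,x_1,\dotsc,x_N)$ in the domain and $(x_0,\Delta w_0,\dotsc,\Delta w_{N-1})$ in the codomain, the Jacobian is block lower triangular: the $x_0$ block is $\ident_n$, and each $\Delta w_k$ row has $G\inv$ as its diagonal block (its derivative with respect to $x_{k+1}$). Hence $\abs{\det\nabla q\inv} = \abs{\det G}^{-N}$, a constant independent of $x_{0:N}$.

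Third, I would assemble the joint prior density of $(X_0,\Delta W_{0:N-1})$. By Assumption \ref{th:system_dynamics_assumptions}\ref{it:initial_state_wiener_process_independence} the initial state is independent of the Wiener increments, and the increments themselves are mutually independent with $\Delta W_k\sim\normald{0,\ident\delta_k}$, so their joint density is the product of $\nu(x_0)$ and Gaussian densities. Applying Thm.~\ref{th:change_densities} and substituting $\Delta w_k = G\inv(\Delta x_k - f(t_k,x_k)\delta_k)$ into the Gaussian exponent, the identity $\norm{G\inv u}^2 = u\trans Q u = \norm[Q]{u}^2$ (with $Q=(GG\trans)\inv$) converts the quadratic form into $\delta_k\norm[Q]{\Delta x_k/\delta_k - f(t_k,x_k)}^2$, which sums to $-2R(x_{0:N})$ and yields \eqref{eq:euler_state_path_prior}.

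Fourth, I would combine this prior with the joint likelihood \eqref{eq:meas_likelihood}, which follows from Assumption \ref{th:measurement_assum}\ref{it:measurement_conditional_independence}, via Bayes' rule to obtain the posterior \eqref{eq:posterior_state_path_density}, and finally take the logarithm and discard the additive constants $-\sum_k\ln\bigl(\abs{\det G}\sqrt{(2\pi)^n\delta_k}\bigr)$ and $-\ln P(\yset)$, which do not depend on $x_{0:N}$ and therefore do not alter the argmax. What remains is precisely $S(x_{0:N})$ as in \eqref{eq:posterior_euler_merit}. I expect no serious obstacle: the only nontrivial calculation is the Jacobian computation and the algebraic simplification of the Gaussian exponent via the weighted norm $\norm[Q]{\cdot}$, both of which are routine once the coordinates are ordered so that $\nabla q\inv$ is triangular.
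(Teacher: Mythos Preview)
Your proposal is correct and follows essentially the same approach as the paper: the paper does not give a separate proof for this lemma but derives it in the paragraphs immediately preceding the statement, via exactly the four steps you outline (diffeomorphism from Assumptions~\ref{th:system_dynamics_assumptions}\ref{it:drift_c2b} and~\ref{it:diffusion_rank}, change of variables by Thm.~\ref{th:change_densities} to obtain \eqref{eq:euler_state_path_prior}, the likelihood \eqref{eq:meas_likelihood}, Bayes' rule, and dropping constants). Your only additions are making the Jacobian computation and the $\norm{G\inv u}^2=\norm[Q]{u}^2$ identity explicit, which the paper leaves implicit.
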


\subsection{Trapezoidal scheme}
The trapezoidal scheme \citep[p.~500]{kloeden1992nss} represents the SDE over the partition points by an implicit difference equation:
\begin{align}
  \label{eq:trapezoidal_scheme}
  X_{k+1} = X_k + \tfrac{\delta_k}{2}\brackt{\big.f(t_k,X_k) + f(t_{k+1},X_{k+1})}
  + G\Delta W_k.
\end{align}

Since the drift $f$ is assumed to be of class $C^2_b$, for sufficiently small $\delta_k$ \eqref{eq:trapezoidal_scheme} is a contraction mapping and always has a unique solution.
We shall assume that the partition is sufficiently fine such that this holds.

As with the Euler scheme, applying Thm.~\ref{th:change_densities} we obtain the joint discretized state path prior density:
\begin{equation}
  \label{eq:trapezoidal_prior}
  p(x_{0:N}) = \frac{
    \nu(x_0) \exp\fparen{\big.U(x_{0:N})}
  }{
    \prod_{k=0}^{N-1}\abs{\det G}\sqrt{(2\pi)^n\delta_k}
  },
\end{equation}
where $U$ is the trapezoidal Onsager--Machlup functional defined as
\begin{multline}
  \textstyle
  U(x_{0:N}) :=
  \sum_{k=0}^{N-1} \ln\det
  \fparen{\big.\ident -\tfrac{1}{2}\nabla_xf(t_{k+1},x_{k+1})\delta_{k}}
  \\
  \textstyle
  -\frac{1}{2}
  \sum_{k=0}^{N-1} \delta_k\normnm[Q]{
    \frac{\Delta x_k}{\delta_k} - 
    \frac{1}{2}[f(t_k,x_k) + f(t_{k+1},x_{k+1})]
  }^2
\end{multline}
with $\nabla_x f(t,a)$ being the Jacobian matrix of $f$ with respect to $x$, evaluated at $t,a$.
Using equations \eqref{eq:meas_likelihood}, \eqref{eq:posterior_state_path_density} and \eqref{eq:trapezoidal_prior} and taking their logarithm, we obtain the discretized MAP state path estimation problem using the trapezoidal scheme.
\begin{lem}
  \label{th:trapezoidal_map_problem}
  The discretized MAP state paths under the trapezoidal scheme are the vectors $x_{0:N}\in\reals^{Nn+n}$ which maximize the trapezoidal merit
  \begin{align}
    \label{eq:trapezoidal_merit}
    \textstyle
    V(x_{0:N}) := 
    U(x_{0:N}) + \ln\nu\fparen{\big.x_0} +
    \sum_{t\in\tmeas} \ln\psi_t\fparen{\big.y_t\cond x_t}.
  \end{align}
\end{lem}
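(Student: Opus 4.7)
The plan is to follow the derivation sketched between \eqref{eq:trapezoidal_prior} and \eqref{eq:trapezoidal_merit}: apply Thm.~\ref{th:change_densities} to the trapezoidal recursion to obtain the joint prior density \eqref{eq:trapezoidal_prior}, combine it with the measurement likelihood \eqref{eq:meas_likelihood} via Bayes' rule \eqref{eq:posterior_state_path_density}, take logarithms, and discard the terms independent of $x_{0:N}$. The substantive work is in justifying the change of variables; the rest is algebra parallel to the Euler case of Lem.~\ref{th:euler_map_problem}.

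First I would define the map $q\colon(x_0,\Delta w_{0:N-1})\mapsto x_{0:N}$ implicitly through \eqref{eq:trapezoidal_scheme}. Because $f$ is of class $C^2_b$ and the partition is assumed fine enough for \eqref{eq:trapezoidal_scheme} to be a contraction in $x_{k+1}$, the implicit function theorem yields a unique $C^1$ solution $x_{k+1}=x_{k+1}(x_k,\Delta w_k)$; iterating over $k$ exhibits $q$ as a $C^1$ diffeomorphism whose inverse has the closed form $\Delta w_k = G\inv[\Delta x_k - \tfrac{\delta_k}{2}(f(t_k,x_k)+f(t_{k+1},x_{k+1}))]$. Ordering inputs as $(x_0,\dotsc,x_N)$ and outputs as $(x_0,\Delta w_0,\dotsc,\Delta w_{N-1})$, the Jacobian of $q\inv$ is block lower triangular with diagonal blocks $\ident$ and $G\inv[\ident-\tfrac{\delta_k}{2}\nabla_x f(t_{k+1},x_{k+1})]$, so
\begin{equation}
\abs{\det\nabla q\inv} = \prod_{k=0}^{N-1}\frac{\abs{\det(\ident-\tfrac{\delta_k}{2}\nabla_x f(t_{k+1},x_{k+1}))}}{\abs{\det G}}.
\end{equation}
For fine enough partitions the bracketed matrix is close to $\ident$, so the absolute value may be dropped and its logarithm reproduces the first sum in $U$.

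Assumption \ref{th:system_dynamics_assumptions}\ref{it:initial_state_wiener_process_independence} and the independence of Wiener increments give $(X_0,\Delta W_{0:N-1})$ joint density $\nu(x_0)$ times a product of $\normald{0,\ident\delta_k}$ densities. Substituting the explicit form of $\Delta w_k$ and using $\norm{G\inv z}^2=\norm[Q]{z}^2$ converts the Gaussian exponents into the quadratic summands of $U$; multiplying by $\abs{\det\nabla q\inv}$ and applying Thm.~\ref{th:change_densities} reproduces \eqref{eq:trapezoidal_prior} exactly. Bayes' rule \eqref{eq:posterior_state_path_density} combined with \eqref{eq:meas_likelihood}, followed by taking logarithms and dropping the $x_{0:N}$-independent normalising factors, then leaves precisely $V(x_{0:N})$. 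The main obstacle is the first step: verifying that the implicit trapezoidal recursion defines a $C^1$ diffeomorphism with the stated block-triangular Jacobian, which requires the contraction-mapping assumption on the partition and a careful ordering of the variables so that the sub-diagonal blocks do not contribute to the determinant.
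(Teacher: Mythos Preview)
Your proposal is correct and follows precisely the approach the paper itself sketches: the paper states the prior \eqref{eq:trapezoidal_prior} as a direct application of Thm.~\ref{th:change_densities} to the trapezoidal recursion and then obtains Lem.~\ref{th:trapezoidal_map_problem} by combining \eqref{eq:trapezoidal_prior}, \eqref{eq:meas_likelihood} and \eqref{eq:posterior_state_path_density} and taking logarithms. You have simply made explicit the Jacobian computation that the paper leaves to the reader, and your block lower-triangular argument for $\nabla q\inv$ and the resulting determinant are exactly what underlies the log-determinant term in $U$.
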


Unlike the Euler scheme, in the trapezoidal the state $X_{k+1}$ depends nonlinearly on the Wiener process increment $\Delta W_k$, due to the implicit nature of \eqref{eq:trapezoidal_scheme}.
This means that the prior density of a state path does not depend only on the density of the associated noise, but also on the change of the volume element, represented by the Jacobian determinant of the drift function.
This extra term represents how much the system dynamics amplifies or attenuates the noise around each path, akin to the drift divergence in the Onsager--Machlup functional.
We note that this term has a numerical and conceptual connection to Lyapunov exponents, whose sum quantifies the average change in hypervolume due to pertubations of a path of a noise-free discrete-time map.

We will now present the theory of hypographical convergence to relate the discretized and continuous-discrete MAP estimation problems.

\section{Limit of the discretized problems}
\label{sec:limit_discretized_problems}
We now consider the discretized problems over a nested sequence of partitions $\{\partition_i\}_{i=1}^\infty$.
The last index of each partition will be denoted $N_i$, their elements by $t_{ki}$ and the time increments by $\delta_{ki}:=t_{k+1,i}-t_{ki}$.
In addition, $\mesh_i$ will denote the mesh (or norm) of the partition $\partition_i$, i.e., the largest distance between consecutive points
\begin{equation}
  \label{eq:partition_norm_defn}
  \textstyle
  \mesh_i := \max_{\,0\leq k<N_i} \delta_{ki}.
\end{equation}
We consider partition sequences which satisfy
\begin{align}
  \label{eq:partition_requirements}
  \partition_i&\subset\partition_{i+1},&
  \textstyle\lim_{i\to\infty}\mesh_i & = 0, &
  \mesh_iN_i < c_3,
\end{align}
where $c_3>0$ is an upper bound.

To relate the various discretizations and the continuous-discrete problems, we must formulate all problems on the same space.
We do this by representing the discretized state paths $\phi_i\colon\tspace\to\reals^n$ as the piecewise linear interpolants of given $x_{0:N_i}$.
We also denote by $\etaspace_i\subset\etaspace$ the space of all piecewise linear functions from $\tspace$ to $\reals^n$ with breaks along $\partition_i$.
The discretized merit functions for each partition will be denoted by $S_i$ and $V_i$, and are accordingly extended for inputs from $\etaspace_i$.
The same goes for the Euler energy functional $R_i$ and the trapezoidal Onsager--Machlup functional $U_i$.

The Cameron--Martin space $\etaspace$ is a reproducing kernel Hilbert space.
It can be endowed with the following inner product and its associated norm:
\begin{equation}
  \label{eq:cameron_martin_inner_product}
  \textstyle
  \innerp[\etaspace]{a,b}:= a(0)\trans b(0) + 
  \int_0^T \dot a(t)\trans\dot b(t) \,\dd t.
\end{equation}
This norm for $\etaspace$ dominates the supremum norm \citep[Prop.~4.1]{daniel1969amf}, i.e., there exists a constant $c_4\in\reals$ such that, for all $\phi\in\etaspace$,
\begin{equation}
  \label{eq:supremum_norm_dominated_sobolev_norm}
  \textstyle
  \sup_{t\in\tspace}\norm{\phi(t)} \leq c_4 
  \norm[\etaspace]{\phi}.
\end{equation}

We use hypographical convergence, or hypo-convergence, to relate the discretized problems and their continuous-discrete counterparts.
Hypo-convergence is a powerful tool because when a series of optimization problems hypo-converge, then any cluster point of their solutions solves the limit problem \citep[Thm.~3.3.3]{polak1997oac}.
Hypographical limits are formally defined as the limit of the hypographs of a series of optimization problems but, in metric spaces, the following definition is equivalent and more convenient to use.

\begin{thm}[{adapted from \citet[Thm.~3.3.2]{polak1997oac}}]
  Let $\etaspace$ be a normed linear space and $\{\etaspace_i\}_{i=1}^\infty$ be a nested family of finite-dimensional subspaces whose limit $\etaspace_\infty$ is dense in $\etaspace$, i.e.,
  \begin{align}
    \etaspace_i\subset\etaspace_{i+1}&\subset\etaspace,&
    \etaspace_\infty &:= \cup_{i=1}^\infty \etaspace_i, &
    \closure{\etaspace_\infty} = \etaspace,
  \end{align}
  where $\closure$ stands for topological closure.
  Additionally, let $H\colon\etaspace\to\ereals$ be the upper semi-continuous merit function of a maximization problem and $H_i\colon\etaspace_i\to\ereals$ be a family of upper semi-continuous merit functions of a family of maximization problems.
  A sufficient condition for the family of maximization problems to hypo-converge to the maximization of $H$ is that for every convergent infinite sequence $\{\phi_i\}_{i\in\ispace}$ where $\phi_i\in\etaspace_i$ and $\phi_i\to\phi$, then
  \begin{equation}
    \textstyle
    \limsup_{i\to\infty} H_i(\phi_i) = H(\phi).
  \end{equation}
\end{thm}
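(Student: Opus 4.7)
The plan is to reduce the claim to the standard sequential characterization of hypo-convergence and verify its two halves from the stated hypothesis. For upper semi-continuous merits $H_i\colon\etaspace_i\to\ereals$ on nested subspaces, hypo-convergence to $H\colon\etaspace\to\ereals$ can be characterized by two sequential conditions: (i) the ``no overshoot'' bound $\limsup_{i\to\infty} H_i(\phi_i)\leq H(\phi)$ along every sequence $\phi_i\in\etaspace_i$ with $\phi_i\to\phi$, and (ii) the existence, for each $\phi\in\etaspace$, of a recovery sequence $\phi_i\in\etaspace_i$ with $\phi_i\to\phi$ along which $\limsup_{i\to\infty} H_i(\phi_i)\geq H(\phi)$. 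This weaker form of recovery, using $\limsup$ rather than $\liminf$, is the one compatible with Polak's consistent-approximations framework and is sufficient for cluster points of discretized maximizers to maximize the limit problem.

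Condition (i) is immediate: the hypothesis $\limsup H_i(\phi_i)=H(\phi)$ for every such sequence trivially gives the required upper bound. For (ii) I use the density $\closure{\etaspace_\infty}=\etaspace$ together with the nestedness $\etaspace_i\subset\etaspace_{i+1}$. Given any $\phi\in\etaspace$, one builds a sequence $\phi_i\in\etaspace_i$ with $\phi_i\to\phi$ by choosing, for each $i$ large enough, an element of $\etaspace_i$ within distance $1/i$ of $\phi$; such elements exist because the distances $\mathrm{dist}(\phi,\etaspace_i)$ are non-increasing in $i$ and tend to zero by density. Applying the hypothesis to this constructed sequence yields $\limsup H_i(\phi_i)=H(\phi)$, which is exactly the inequality in (ii).

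The main obstacle is simply to pin down the precise version of hypo-convergence being invoked and confirm that the ``limsup equals $H(\phi)$'' form of the hypothesis is strong enough to cover both halves. Once that is settled the proof splits cleanly: the universal quantifier in the hypothesis delivers (i) without work, and the density of $\etaspace_\infty$ delivers the recovery sequence needed in (ii). No further estimation or construction is required beyond picking nearby elements of the approximating subspaces, so the bulk of the subsequent effort in the paper will really lie not here but in \emph{applying} the theorem, i.e.\ proving the ``$\limsup=H$'' condition for the concrete Euler and trapezoidal merits against the corresponding continuous-discrete limits.
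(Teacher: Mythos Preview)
The paper does not supply a proof of this theorem: it is quoted, with attribution, as an adaptation of \citet[Thm.~3.3.2]{polak1997oac}, and the paper moves directly to verifying the hypothesis in Lemmas~\ref{th:euler_merit_convergence}--\ref{th:denseness_etaspace_infty}. So there is no in-paper proof to compare against; your sketch is supplying what the paper delegates to the reference.

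Your argument is essentially right, but one point deserves tightening. You invoke a ``weaker'' recovery condition with $\limsup$ in place of $\liminf$, appealing to Polak's framework. You do not actually need to rely on that weakening here: notice the hypothesis is quantified over \emph{every} convergent sequence $\{\phi_i\}_{i\in\ispace}$ indexed by an arbitrary infinite $\ispace\subset\naturals$, hence over all subsequences as well. Consequently, once you have built a recovery sequence $\phi_i\to\phi$, every subsequence of $\big(H_i(\phi_i)\big)$ has $\limsup$ equal to $H(\phi)$, which forces the full sequence to converge to $H(\phi)$. In particular $\liminf_i H_i(\phi_i)=H(\phi)$, and the standard sequential characterization of hypo-convergence is satisfied without appealing to any nonstandard variant. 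Apart from spelling out this subsequence step, your decomposition into the ``no overshoot'' bound plus a density-based recovery sequence is exactly the right structure, and your remark that the real work of the paper lies in checking the hypothesis for the concrete Euler and trapezoidal merits is on target.
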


We now present a few lemmas that will help prove hypo-convergence.

\begin{lem}
  \label{th:convergence_interpolates}
  Let $q\colon\tspace\times\reals^n\to\reals^o$ be a continuous function and $\{\phi_i\}_{i\in\ispace}$ be a convergent infinite sequence where $\phi_i\in\etaspace_i$ and $\phi_i\to\phi$.
  Additionally, let $g_i\colon\tspace\to\reals^n$ be the piecewise constant interpolations of $q\big(t,\phi_i(t)\big)$ with breaks along the partitions $\partition_i$ and define $g(t):=q\fparen{\big.t,\phi(t)}$.
  Then $g_i$ and $g$ are bounded and
  \begin{equation}
    \label{eq:difference_between_interpolates}
    \textstyle
    \lim_{i\to\infty}\sup_{t\in\tspace}\norm{g(t)-g_i(t)} =0.
  \end{equation}
\end{lem}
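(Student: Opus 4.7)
The plan is to control $\norm{g(t)-g_i(t)}$ by splitting it into a ``convergence of $\phi_i$'' part and an ``interpolation error'' part, and to exploit the fact that convergence in the Cameron--Martin norm is stronger than uniform convergence via \eqref{eq:supremum_norm_dominated_sobolev_norm}. First I would use this domination inequality to conclude that $\sup_{t\in\tspace}\norm{\phi_i(t)-\phi(t)} \to 0$, so that the ranges $\phi_i(\tspace)$ and $\phi(\tspace)$ all lie in a common bounded subset of $\reals^n$. Since $\tspace$ is compact, $q$ is uniformly continuous on a compact box $K\subset\tspace\times\reals^n$ containing all the relevant graphs, and $q$ is bounded there, which immediately gives boundedness of $g$ and of every $g_i$.

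For the uniform convergence itself, fix $t\in\tspace$ and let $t_{ki}$ be the partition point immediately to the left of $t$ in $\partition_i$. I would estimate
\begin{equation}
  \norm{g(t)-g_i(t)} \leq \norm{q(t,\phi(t))-q(t,\phi_i(t))} + \norm{q(t,\phi_i(t))-q(t_{ki},\phi_i(t_{ki}))}.
\end{equation}
The first term vanishes uniformly in $t$ by uniform continuity of $q$ on $K$ together with $\phi_i\to\phi$ uniformly. For the second term, a further split into $\norm{q(t,\phi_i(t))-q(t,\phi_i(t_{ki}))}$ and $\norm{q(t,\phi_i(t_{ki}))-q(t_{ki},\phi_i(t_{ki}))}$ reduces the problem to showing that both $|t-t_{ki}|$ and $\norm{\phi_i(t)-\phi_i(t_{ki})}$ tend to zero uniformly in $t$ and $i$. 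The first is immediate since $|t-t_{ki}|\leq\mesh_i\to 0$.

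The main technical step is bounding $\norm{\phi_i(t)-\phi_i(t_{ki})}$ uniformly in $i$. Here I would use that $\phi_i\in\etaspace$ is absolutely continuous, so by Cauchy--Schwarz,
\begin{equation}
  \norm{\phi_i(t)-\phi_i(t_{ki})} = \Big\lVert\int_{t_{ki}}^t \dot\phi_i(s)\,\dd s\Big\rVert \leq \sqrt{\delta_{ki}}\;\norm{\dot\phi_i}_{L^2(\tspace)} \leq \sqrt{\mesh_i}\;\norm[\etaspace]{\phi_i}.
\end{equation}
Since $\phi_i\to\phi$ in $\etaspace$, the sequence $\norm[\etaspace]{\phi_i}$ is bounded, so this bound goes to zero at rate $O(\sqrt{\mesh_i})$ uniformly in $t$ and $i$. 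Combining with the uniform continuity of $q$ on $K$, both halves of the second term tend to zero uniformly, and the lemma follows by taking the sup over $t\in\tspace$.

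The only real obstacle is the interpolation-error bound on $\norm{\phi_i(t)-\phi_i(t_{ki})}$: uniform convergence of $\phi_i$ alone is not strong enough, since it does not prevent rapid wiggling between partition points. The key is that convergence in $\etaspace$ yields a uniform $L^2$ bound on $\dot\phi_i$, which, combined with the Cauchy--Schwarz estimate above, supplies the necessary equicontinuity of the family $\{\phi_i\}$ on intervals of width $\mesh_i$.
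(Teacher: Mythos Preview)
Your proof is correct, but it handles the interpolation error differently from the paper. The paper never bounds $\norm{\phi_i(t)-\phi_i(t_{ki})}$ at all: instead it compares $g_i(t)=q\big(t_{ki},\phi_i(t_{ki})\big)$ directly to $g(t)=q\big(t,\phi(t)\big)$ via a two-variable modulus of continuity $\rho_q$, and controls the spatial displacement by routing through the \emph{limit} function,
\[
\norm{\phi_i(t_{ki})-\phi(t)} \leq c_4\norm[\etaspace]{\phi_i-\phi} + \rho_\phi(\mesh_i),
\]
using only uniform continuity of the single continuous function $\phi$ on the compact interval $\tspace$. This sidesteps any equicontinuity argument for the family $\{\phi_i\}$ and keeps the proof to a couple of lines.

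Your route instead extracts genuine equicontinuity of each $\phi_i$ from the Cauchy--Schwarz bound $\norm{\phi_i(t)-\phi_i(t_{ki})}\leq \sqrt{\mesh_i}\,\norm[\etaspace]{\phi_i}$, together with boundedness of $\norm[\etaspace]{\phi_i}$. This is slightly longer but gives a quantitative H\"older-$\tfrac12$ rate in $\mesh_i$, which is more informative than the bare modulus-of-continuity statement and could be reused if one wanted explicit rates of convergence of $g_i\to g$. Both arguments are sound; the paper's is more economical, yours is more constructive.
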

\begin{proof}
  From \eqref{eq:supremum_norm_dominated_sobolev_norm} and the fact that the $\phi_i$ sequence is convergent with respect to the $\etaspace$-norm, it follows that $\sup_{i,t}\norm{\phi_i(t)}<\infty$.
  We can then restrict our analysis of $q$ to a compact subset of its domain, where it is bounded and uniformly continuous.
  Let $\rho_q$ denote a modulus of continuity of $q$, i.e., for all $t,\tau\in\tspace$ and $a,b$ in the compact subset of $\reals^n$ being analysed,
  \begin{equation}
    \label{eq:modulus_of_continuity_defn}
    \norm{q(t,a)-q(\tau,b)} \leq \rho_q\fparen{\big.\abs{t-\tau},\norm{a-b}}.
  \end{equation}
  Similarly, let $\rho_\phi$ denote the modulus of continuity of $\phi$ and define $e_i:=c_4\norm[\etaspace]{\phi_i-\phi} + \rho_\phi(\mesh_i)$.
  Then $e_i\to 0$ and 
  \begin{equation}
    \label{eq:max_difference_between_phis}
    \textstyle
    \sup_{t,\tau\in[t_{ki}, t_{k+1,i}]} \normnm{\phi_i(t)-\phi(\tau)} \leq e_i.
  \end{equation}
  Combining \eqref{eq:modulus_of_continuity_defn} and \eqref{eq:max_difference_between_phis} we have that
  \belowdisplayskip=-12pt
  \begin{align*}
    \textstyle
    \sup_{t\in\tspace}\norm{g_i(t)-g(t)} &\leq \rho_q(\mesh_i, e_i),
    & \rho_q(\mesh_i, e_i)\to 0.
  \end{align*}
\end{proof}

We now prove that, for any convergent sequence of piecewise linear functions over the partitions, the discretized merits converge to the continuous-discrete merits.
The $L^2$ space for $\reals^n$-valued functions will be denoted by $L^2_n$, and corresponds to the direct sum of $n$ copies of the standard $L^2$ space, with its inner product given by
\begin{equation}
  \label{eq:l2n_norm}
  \textstyle
  \innerp[L^2_n]{a,b} := \int_0^T a(t)\trans b(t) \,\dd t.
\end{equation}

\begin{lem}
  \label{th:euler_merit_convergence}
  Let $\{\phi_i\}_{i\in\ispace}$ be a convergent infinite sequence where $\phi_i\in\etaspace_i$ and $\phi_i\to\phi$.
  Then
  \begin{equation}
    \textstyle
    \lim_{i\to\infty}S_i(\phi_i) = H_\energy(\phi),
  \end{equation}
  where $H_\energy$ is defined in \eqref{eq:posterior_energy_merit} and $S_i$ is given by \eqref{eq:posterior_euler_merit} for partition $\partition_i$.
\end{lem}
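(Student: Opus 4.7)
The plan is to express the Euler merit in $L^2_Q$ form and use Lemma \ref{th:convergence_interpolates} together with continuity of the squared norm to pass to the limit. Since $\phi_i\in\etaspace_i$ is piecewise linear with breaks along $\partition_i$, its derivative $\dot\phi_i$ is piecewise constant with value $\Delta\phi_i(t_{ki})/\delta_{ki}$ on $[t_{ki},t_{k+1,i})$. Defining $g_i\colon\tspace\to\reals^n$ as the piecewise-constant function equal to $f\fparen{\big.t_{ki},\phi_i(t_{ki})}$ on the same interval, the sum in \eqref{eq:euler_energy_functional} telescopes into a Lebesgue integral, giving
\begin{equation}
  R_i(\phi_i) = -\tfrac{1}{2}\int_0^T\norm[Q]{\dot\phi_i(t) - g_i(t)}^2\dd t.
\end{equation}

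Next, I would invoke Lemma \ref{th:convergence_interpolates} with $q=f$ to obtain $\sup_{t\in\tspace}\norm{g_i(t)-g(t)}\to 0$, where $g(t):=f\fparen{\big.t,\phi(t)}$; since $\tspace$ has finite length, this uniform convergence implies $g_i\to g$ in $L^2_n$. The hypothesis $\phi_i\to\phi$ in $\etaspace$ directly implies $\dot\phi_i\to\dot\phi$ in $L^2_n$ by definition of the inner product \eqref{eq:cameron_martin_inner_product}. Hence $\dot\phi_i - g_i\to \dot\phi - g$ in $L^2_n$, and since $Q$ is a fixed positive-definite symmetric matrix, the map $h\mapsto\int_0^T\norm[Q]{h(t)}^2\dd t$ is continuous on $L^2_n$, yielding $R_i(\phi_i)\to J_\energy(\phi)$.

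For the boundary and measurement terms I would use that the $\etaspace$-norm dominates the supremum norm by \eqref{eq:supremum_norm_dominated_sobolev_norm}, so $\phi_i\to\phi$ uniformly on $\tspace$ and in particular $\phi_i(t)\to\phi(t)$ for every $t\in\tmeas\cup\{0\}$. Continuity of $\nu$ (Assumption~\ref{th:system_dynamics_assumptions}\ref{it:initial_density}) and of each $\psi_t(y_t\cond\cdot)$ (Assumption~\ref{th:measurement_assum}) then gives $\ln\nu\fparen{\big.\phi_i(0)}\to\ln\nu\fparen{\big.\phi(0)}$ and $\ln\psi_t\fparen{\big.y_t\cond\phi_i(t)}\to\ln\psi_t\fparen{\big.y_t\cond\phi(t)}$ in $\ereals$, where the convention $\ln 0 = -\infty$ handles the case of vanishing densities. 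Summing over the finite set $\tmeas$ and combining with the convergence of $R_i$ yields $S_i(\phi_i)\to H_\energy(\phi)$.

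The only mildly delicate step is the identification of $R_i(\phi_i)$ with the $L^2_Q$ integral, because it requires recognising $g_i$ as precisely the piecewise-constant interpolant covered by Lemma \ref{th:convergence_interpolates}. Everything else is a combination of continuity of a Hilbert-space norm and continuity of scalar composition. The boundedness needed to pass limits through the squared norm comes for free because $\dot\phi_i$ is $L^2_n$-convergent (hence bounded) and $g_i$ is uniformly bounded by the conclusion of Lemma \ref{th:convergence_interpolates}.
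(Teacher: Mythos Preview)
Your proposal is correct and follows essentially the same approach as the paper: both identify $R_i(\phi_i)$ as a weighted $L^2$ norm of $\dot\phi_i-g_i$ with $g_i$ the left-endpoint piecewise-constant interpolant of $f\fparen{\big.t,\phi_i(t)}$, invoke Lemma~\ref{th:convergence_interpolates} for $g_i\to g$, use $\etaspace$-convergence for $\dot\phi_i\to\dot\phi$ in $L^2_n$, and conclude by continuity of the squared norm; the boundary and likelihood terms are handled identically via \eqref{eq:supremum_norm_dominated_sobolev_norm} and continuity. Your write-up is in fact slightly cleaner, since you pass directly through continuity of $h\mapsto\int_0^T\norm[Q]{h}^2$ on $L^2_n$ rather than the paper's intermediate inequality.
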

\begin{proof}
  Since by \eqref{eq:supremum_norm_dominated_sobolev_norm} convergence in $\etaspace$ implies uniform convergence and the initial state density $\nu$ and measurement likelihoods $\psi_t$ are continuous, we have that
  \begin{align}
    \nu\fparen{\big.\phi_i(0)} &\to \nu\fparen{\big.\phi(0)}, &
    \psi_t\fparen{\big.\phi_i(t)} &\to\psi_t\fparen{\big.\phi(t)},
  \end{align}
  so, as the logarithm is also continuous in $\ereals$, it only remains for us to prove that $R_i(\phi_i)\to J_\energy(\phi)$.

  Let $g_i$ be the piecewise constant interpolation of $f\fparen{\big.t,\phi_i(t)}$ with breaks across $\partition_i$ and with the left endpoint of each piece as the interpolation point.
  In addition, define $g(t):=f\fparen{\big.t,\phi(t)}$.
  Then, using the bound $\norm[Q]{x}\leq\lVert Q\rVert\norm{x}$ and the fact that $\dot\phi_i$ is piecewise constant, from the definitions \eqref{eq:energy_functional_defn} and \eqref{eq:euler_energy_functional} of the functionals we have that
  \begin{equation}
    \abs{R_i(\phi_i) - J_\energy(\phi)}
    \leq \lVert Q\rVert
    \paren{
      \normnm[L^2_n]{\dot\phi_i - g_i}^2
      - \normnm[L^2_n]{\dot\phi - g}^2
    }.
  \end{equation}
  As the norm and exponentiation are continuous functions, it suffices to show that $\phi_i\to\phi$ and $g_i\to g$ with respect to the $L^2_n$ norm.
  The former follows trivially from the fact that $\norm[L^2_n]{\dot \varphi}\leq\norm[\etaspace]{\varphi}$ for all $\varphi\in\etaspace$.
  The latter follows from Lem.~\ref{th:convergence_interpolates} and the fact that uniform convergence implies convergence in $L^2$ for finite measure spaces.
\end{proof}

\begin{lem}
  \label{th:trapezoidal_merit_convergence}
  Let $\{\phi_i\}_{i\in\ispace}$ be a convergent infinite sequence where $\phi_i\in\etaspace_i$ and $\phi_i\to\phi$.
  Then
  \begin{equation}
    \textstyle
    \lim_{i\to\infty}V_i(\phi_i) = H(\phi),
  \end{equation}
  where $H$ is defined in \eqref{eq:posterior_map_merit} and $V_i$ is given by \eqref{eq:trapezoidal_merit} for partition $\partition_i$.
\end{lem}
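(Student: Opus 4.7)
The plan is to mirror the proof of Lemma~\ref{th:euler_merit_convergence}. The boundary and measurement terms in $V_i(\phi_i)$ and $H(\phi)$ are identical, so by continuity of $\nu$, each $\psi_t$, and the logarithm, combined with the uniform convergence $\phi_i\to\phi$ implied by \eqref{eq:supremum_norm_dominated_sobolev_norm}, those terms converge. What remains is to show $U_i(\phi_i)\to J(\phi)$, which I would do by splitting $U_i(\phi_i)$ into the log-determinant sum and the trapezoidal quadratic sum and handling each.

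For the quadratic sum, define the piecewise-constant interpolant $\tilde g_i$ on $[t_{ki}, t_{k+1,i}]$ as the trapezoidal average $\tfrac{1}{2}\bigl[f(t_{ki},\phi_i(t_{ki})) + f(t_{k+1,i},\phi_i(t_{k+1,i}))\bigr]$, and set $g(t):=f(t,\phi(t))$. Each of the two summands in $\tilde g_i$ is of the kind treated by Lemma~\ref{th:convergence_interpolates} (values of $f(t,\phi_i(t))$ sampled at a partition endpoint), so applying that lemma to both the left- and right-endpoint interpolants gives $\sup_t\norm{\tilde g_i(t)-g(t)}\to 0$, and hence $\tilde g_i\to g$ in $L^2_n$. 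Since $\dot\phi_i$ is piecewise constant on $\partition_i$, the quadratic sum in $U_i(\phi_i)$ equals $-\tfrac{1}{2}\normnm[L^2_n]{\dot\phi_i-\tilde g_i}^2$ evaluated with the $Q$-weighted inner product. Combined with $\dot\phi_i\to\dot\phi$ in $L^2_n$ (because $\norm[L^2_n]{\dot\varphi}\leq\norm[\etaspace]{\varphi}$), continuity of the norm gives convergence to $-\tfrac{1}{2}\int_0^T\normnm[Q]{\dot\phi(t)-f(t,\phi(t))}^2\dd t$.

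For the log-determinant sum, the key observation is that Assumption~\ref{th:system_dynamics_assumptions}\ref{it:drift_c2b} bounds $\nabla_x f$ uniformly by some $M$, so for all sufficiently large $i$ one has $\tfrac{1}{2}\mesh_i M<1$ and the expansion $\ln\det(\ident+A)=\tr A+r(A)$ with $\abs{r(A)}\leq c_5\norm{A}^2$ is valid for $A=-\tfrac{1}{2}\delta_{ki}\nabla_x f(t_{k+1,i},\phi_i(t_{k+1,i}))$ with a universal constant $c_5$. Summing the leading term yields $-\tfrac{1}{2}\sum_k\delta_{ki}\diverg f(t_{k+1,i},\phi_i(t_{k+1,i}))$, which is a Riemann sum of the piecewise-constant interpolant of $\diverg f(\cdot,\phi_i(\cdot))$ (with right-endpoint sampling). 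Lemma~\ref{th:convergence_interpolates} applied to $q=\diverg f$ gives uniform convergence of this interpolant to $\diverg f(\cdot,\phi(\cdot))$, so the Riemann sum converges to $-\tfrac{1}{2}\int_0^T\diverg f(t,\phi(t))\dd t$. The remainder is controlled by $\sum_k c_5\bigl(\tfrac{1}{2}\delta_{ki}M\bigr)^2\leq\tfrac{c_5 M^2}{4}\mesh_i\sum_k\delta_{ki}\leq\tfrac{c_5 M^2 T}{4}\mesh_i\to 0$.

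Adding the two limits reconstructs the Onsager--Machlup functional $J(\phi)$, completing $U_i(\phi_i)\to J(\phi)$ and hence $V_i(\phi_i)\to H(\phi)$. The main obstacle is the log-determinant term: one must justify the scalar expansion uniformly in $i$ (requiring the boundedness of $\nabla_x f$ from Assumption~\ref{th:system_dynamics_assumptions}\ref{it:drift_c2b}) and control the total remainder by exploiting $\mesh_i N_i<c_3$ together with $\mesh_i\to 0$. Everything else is a direct adaptation of the machinery already built for Lemma~\ref{th:euler_merit_convergence}.
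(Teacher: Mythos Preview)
Your proposal is correct and follows essentially the same approach as the paper's proof: both reduce to showing $U_i(\phi_i)\to J(\phi)$, split into the quadratic part (handled via left- and right-endpoint interpolants and Lemma~\ref{th:convergence_interpolates}) and the log-determinant part (handled via the second-order expansion $\ln\det(\ident+A)=\tr A + O(\norm{A}^2)$ plus Lemma~\ref{th:convergence_interpolates} applied to $\diverg f$). One minor remark: your remainder bound $\sum_k\delta_{ki}^2\le\mesh_i\sum_k\delta_{ki}=\mesh_i T$ is actually tighter than the paper's $c_{10}N_i\mesh_i^2$, and does not require the hypothesis $\mesh_iN_i<c_3$ that you mention in your closing sentence---so that hypothesis is in fact unnecessary for your argument.
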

\begin{proof}
  As in the proof of Lem.~\ref{th:euler_merit_convergence}, the convergence of the terms corresponding to $\nu$ and $\psi_t$ follows trivially, so we will concentrate on proving that $U_i(\phi_i)\to J(\phi)$.
  Define
  \begin{multline}
    \textstyle
    e_i:= \sum_{k=0}^{N-1} \ln\det
    \fparen{\big.\ident
      -\tfrac{1}{2}\nabla_xf(t_{k+1},\phi_i(t_{k+1}))\delta_{k}
    } \\
    \textstyle
    - \frac{1}{2}\int_0^T\diverg f\fparen{\big.t, \phi(t)} \dd t.
  \end{multline}
  We now show that $e_i\to 0$.
  
  First, recall that for any nonsingular matrix $A\in\reals^{n\times n}$, $\ln\det A=\tr\ln A$, where $\ln A$ is any matrix logarithm of $A$.
  In addition, for the principal logarithm we have that, for all matrices $A$ with spectral radius smaller than unity, $\ln(\ident + A) = \sum_{j=1}^\infty (-1)^{j+1}A^j/j$ \citep[Sec.~5.2]{higham2010cmf}.
  Since the drift $f$ is assumed to be of class $C^2_b$, we have that for small enought $\mesh_i$ (or, equivalently, sufficiently large $i$) $\nabla_xf(t,x)\delta_{ki}$ always has spectral radius smaller than unity.
  This implies that there exists a constant $c_{10}$ such that, for all $i$ above a certain limit, $x\in\reals^n$ and $t\in\tspace$, 
  \begin{align}
    \abs{
      \ln \det \fparen{\big.\ident - \tfrac{1}{2}\nabla_xf(t,x)\delta_{ki}} 
      - \tfrac{1}{2}\diverg f(t,x)\delta_{ki}
    } \leq c_{10}\mesh_i^2.
  \end{align}
  Let $d_i$ denote the piecewise constant interpolation of $\diverg f\fparen{\big.t,\phi_i(t)}$ with breaks across $\partition_i$ and with the right endpoint of each piece as the interpolation point.
  Likewise, define $d(t):= \diverg f\fparen{\big.t,\phi(t)}$.
  We then have that, for all sufficiently large $i$,
  \begin{equation}
    \textstyle
    \abs{e_i} \leq \frac{1}{2}\int_0^T \abs{d(t) - d_i(t)} \dd t 
    + c_{10} N_i\mesh_i^2.
  \end{equation}
  From Lem.~\ref{th:convergence_interpolates} and the fact that uniform convergence implies convergence in $L^1$ in finite measure spaces we have that $d_i\to d$ in $L^1$.
  Also, from \eqref{eq:partition_requirements} we have that $N_i\mesh_i$ is bounded, so $e_i\to 0$.

  We now show that the quadratic terms of the merit functions converge.
  Let $g_i$ and $h_i$ denote piecewise constant interpolations of $f\fparen{\big.t,\phi_i(t)}$ with breaks across $\partition_i$; $g_i$ with the left endpoint of each piece as the interpolation point and $h_i$ with the right endpoint.
  It suffices to prove that
  \begin{equation}
    \textstyle
    \lim_{i\to\infty} \normnm[L^2_n]{\dot\phi_i - \frac{1}{2}(g_i+h_i)}^2
    - \normnm[L^2_n]{\dot\phi - g}^2 = 0.
  \end{equation}
  From Lem.~\ref{th:convergence_interpolates} we have that $g_i\to g$ and $h_i\to g$ in $L^2_n$.
  Furthermore, $\phi_i\to\phi$, so due to continuity of the norm and exponentiation $V_i(\phi_i)\to H(\phi)$.
\end{proof}

\begin{lem}
  \label{th:denseness_etaspace_infty}
  The space $\etaspace_\infty$ is dense in $\etaspace$.
\end{lem}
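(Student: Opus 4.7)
The plan is to show that for every $\phi\in\etaspace$ there is a sequence $\phi_i\in\etaspace_i$ with $\norm[\etaspace]{\phi_i-\phi}\to 0$. The natural candidate is the piecewise linear interpolant of $\phi$ at the nodes of $\partition_i$, which by construction lies in $\etaspace_i$. Since the interpolant agrees with $\phi$ at $t=0$, the boundary term in the inner product \eqref{eq:cameron_martin_inner_product} vanishes, and it suffices to establish that $\dot\phi_i\to\dot\phi$ in $L^2_n$.

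On each subinterval $[t_{ki},t_{k+1,i}]$ the derivative of the interpolant is constant and equal to $\delta_{ki}\inv\int_{t_{ki}}^{t_{k+1,i}}\dot\phi(s)\,\dd s$, i.e., the mean of $\dot\phi$ over the subinterval. Equivalently, the map $\dot\phi\mapsto\dot\phi_i$ is the $L^2_n$-orthogonal projection $\Pi_i$ onto the subspace of piecewise constant functions with breaks along $\partition_i$. Thus the claim reduces to showing that $\Pi_i\to\ident$ in the strong operator topology on $L^2_n$.

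To prove the latter, I would first handle the continuous case: if $g\in C(\tspace;\reals^n)$, then uniform continuity on the compact $\tspace$ together with $\mesh_i\to 0$ yields that $\Pi_ig\to g$ uniformly, hence in $L^2_n$. For a general $h\in L^2_n$ I would invoke density of continuous functions in $L^2_n$ and the contractivity $\norm[L^2_n]{\Pi_ih}\leq\norm[L^2_n]{h}$ (which follows from Jensen's inequality applied subinterval by subinterval). Given $\varepsilon>0$, pick $g$ continuous with $\norm[L^2_n]{h-g}<\varepsilon$, then
\begin{equation}
  \norm[L^2_n]{\Pi_ih-h}
  \leq \norm[L^2_n]{\Pi_i(h-g)} + \norm[L^2_n]{\Pi_ig-g} + \norm[L^2_n]{g-h}
  \leq 2\varepsilon + \norm[L^2_n]{\Pi_ig-g},
\end{equation}
and the last term vanishes as $i\to\infty$ by the continuous case. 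Applying this to $h=\dot\phi$ concludes $\dot\phi_i\to\dot\phi$ in $L^2_n$, hence $\phi_i\to\phi$ in $\etaspace$, so $\etaspace_\infty$ is dense in $\etaspace$.

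The only mildly technical step is the strong convergence of $\Pi_i$ to the identity; everything else is bookkeeping. That step is not really an obstacle so much as an invocation of a standard $L^2$ approximation argument (essentially the same fact that underlies the martingale convergence theorem for conditional expectations with respect to the filtration generated by $\partition_i$). The requirement $\mesh_i\to 0$ from \eqref{eq:partition_requirements} is exactly what makes this approximation work; the bound $\mesh_iN_i<c_3$ is not needed here.
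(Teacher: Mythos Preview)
Your proof is correct and close in spirit to the paper's, but organized a bit differently. The paper argues that the spaces $\sspace_i$ of piecewise constant functions with breaks along $\partition_i$ have union dense in $C(\tspace;\reals^n)$ (order-1 splines), hence dense in $L^2_n$; it then picks, for a given $\dot\phi$, any $u\in\sspace_i$ with $\norm[L^2_n]{u-\dot\phi}<\epsilon$ and integrates to get $\phi_i(t):=\phi(0)+\int_0^t u$. You instead take the specific piecewise linear interpolant of $\phi$ and recognize its derivative as the $L^2_n$-orthogonal projection $\Pi_i\dot\phi$ onto $\sspace_i$, then prove $\Pi_i\to\ident$ strongly via uniform continuity on the continuous dense subset plus contractivity of $\Pi_i$. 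Both routes ultimately hinge on the density of continuous functions in $L^2_n$ together with $\mesh_i\to 0$; the paper's argument is slightly shorter and non-constructive, while yours pins down the canonical approximant and makes explicit the projection/conditional-expectation structure, which is a clean way to see why the bound $\mesh_iN_i<c_3$ is indeed irrelevant here.
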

\begin{proof}
  Denote by $\sspace_i$ the space of piecewise constant functions from $\tspace$ to $\reals^n$ with breaks along $\partition_i$.
  The $\sspace_i$ are spaces of order 1 splines and their union is dense in the space of continuous functions equipped with the supremum norm \citep[p.~147]{deboor2001pgs}.
  Since continuous functions are dense in $L^2_n$ and denseness is a transitive relation, $\sspace_i$ is dense in $L^2_n$.
  Thus, for any $\phi\in\etaspace$ and $\epsilon\in\realsnonneg$, there exists an $i\in\naturals$ and $u\in\sspace_i$ such that $\normnm[L^2_n]{u-\dot\phi}<\epsilon$.
  If we then define $\phi_i(t):= \phi(0) +\int_0^t\dot\phi(\tau)\dd\tau$ we have that $\phi_i\in\etaspace_i$ and $\norm[\etaspace]{\phi_i-\phi}<\epsilon$.
\end{proof}

We are now ready to prove one of the main results of this paper.

\begin{thm}
  \label{th:main_theorem}
  The discretized MAP state path estimation with the trapezoidal scheme hypo-converges to the continuous-discrete MAP state path estimation; and the discretized MAP state path estimation with the Euler scheme hypo-converges to the minimum energy estimation.
\end{thm}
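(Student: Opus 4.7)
The plan is to read the theorem as a direct application of the adapted hypo-convergence criterion stated earlier in this section (the variant of Polak's Thm.~3.3.2), with Lems.~\ref{th:euler_merit_convergence}, \ref{th:trapezoidal_merit_convergence} and \ref{th:denseness_etaspace_infty} supplying its hypotheses. The criterion requires three things: that the subspaces $\etaspace_i\subset\etaspace$ be nested and finite-dimensional with dense union; that the merit functions be upper semi-continuous into $\ereals$; and that for every convergent sequence $\phi_i\in\etaspace_i$ with $\phi_i\to\phi$ one has $\limsup_{i\to\infty} H_i(\phi_i) = H(\phi)$, where $H_i$ and $H$ stand for the appropriate discretized and limit merits.

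The first hypothesis is essentially definitional: each $\etaspace_i$ is the space of piecewise linear interpolants with breaks along $\partition_i$, hence finite-dimensional, and nesting follows from $\partition_i\subset\partition_{i+1}$; denseness of $\etaspace_\infty$ in $\etaspace$ is exactly Lem.~\ref{th:denseness_etaspace_infty}. For upper semi-continuity of $H$, $H_\energy$, $V_i$ and $S_i$, the quadratic and divergence/determinant contributions are continuous because $f\in C_b^2$ and the $\etaspace$-norm dominates the supremum norm by \eqref{eq:supremum_norm_dominated_sobolev_norm}, so all pointwise evaluations of $f$, $\nabla_x f$ and $\diverg f$ along $\phi$ are continuous in $\phi$; the log-density contributions, extended by $\ln 0 := -\infty$, are upper semi-continuous as compositions of $\ln$ with continuous nonnegative densities (continuous on the open set where the density is positive, and descending to $-\infty$ elsewhere), and finite sums of $\ereals$-valued upper semi-continuous functions remain upper semi-continuous.

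The limsup condition is then delivered in strengthened form by the two convergence lemmas: Lem.~\ref{th:trapezoidal_merit_convergence} gives $V_i(\phi_i)\to H(\phi)$, so the adapted criterion yields hypo-convergence of the trapezoidal problems to the maximization of $H$, which is continuous-discrete MAP state path estimation by Lem.~\ref{th:cd_map_problem}; and Lem.~\ref{th:euler_merit_convergence} gives $S_i(\phi_i)\to H_\energy(\phi)$, so the Euler problems hypo-converge to the maximization of $H_\energy$, which is minimum energy estimation by Lem.~\ref{th:minimum_energy_problem}.

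The substantive content of the theorem has already been absorbed into Lems.~\ref{th:euler_merit_convergence} and \ref{th:trapezoidal_merit_convergence}---notably the delicate passage from the trapezoidal sum $\sum_k \ln\det\fparen{\big.\ident-\tfrac{1}{2}\nabla_xf(t_{k+1},\phi_i(t_{k+1}))\delta_{ki}}$ to $-\tfrac{1}{2}\int_0^T\diverg f\fparen{\big.t,\phi(t)}\,\dd t$, controlled there via the $C_b^2$ matrix-logarithm expansion, the bound $\mesh_i N_i<c_3$, and the interpolation Lem.~\ref{th:convergence_interpolates}---so the present step is a short assembly of those ingredients with Lem.~\ref{th:denseness_etaspace_infty}. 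The only potential wrinkle worth flagging is the upper semi-continuity check at paths where $\nu(\phi(0))$ or some $\psi_t(y_t\mid\phi(t))$ vanishes, but this is handled uniformly by the standard $\ln 0 := -\infty$ convention together with the monotonicity and continuity of $\ln\colon[0,\infty)\to\ereals$.
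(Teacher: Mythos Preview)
Your proposal is correct and follows the same assembly-of-lemmas route as the paper: invoke the adapted Polak criterion, supply denseness via Lem.~\ref{th:denseness_etaspace_infty}, and supply the limsup condition via Lems.~\ref{th:euler_merit_convergence} and \ref{th:trapezoidal_merit_convergence}. In fact you are slightly more careful than the paper, which does not explicitly verify the upper semi-continuity hypothesis at all; your treatment of the $\ln 0 := -\infty$ case is a welcome addition that the paper leaves implicit.
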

\begin{proof}
  From Lem.~\ref{th:denseness_etaspace_infty} we know that $\etaspace_\infty$ is dense in $\etaspace$.
  From Lemmas \ref{th:euler_merit_convergence} and \ref{th:trapezoidal_merit_convergence} we then have that the merits converge for any convergent sequence of $\phi_i$.
\end{proof}

In the sequence, we present simulated examples to illustrate an application of robust estimation under the presence of outliers using both the MAP and minimum energy estimators.

\section{Simulated Examples}
\label{sec:simulated_experiments}

The stochastic Van der Pol oscillator is a nonlinear system commonly used to compare state estimators.
This system, discretized with the Euler scheme, was used by \citet{aravkin2011llr, aravkin2012rtf} for a comparison of robust discrete-time estimators.
Denoting its states  by $x := [u,\; v]\trans$, its dynamics are defined by
\begin{align}
  \label{eq:vdp_sde}
  f(t, x) &:=
  \begin{bmatrix}
    v \\ -u + 2(1 - u^2)v
  \end{bmatrix}, &
  G &:=
  \begin{bmatrix*}[c]
    \np{0.1} & \np{0} \\ \np{0} & \np{0.1}
  \end{bmatrix*}.
\end{align}

This system has a nonlinear drift divergence, given by
$\diverg f(t, x) = 2(1 - u^2)$, which is higher near the origin.
Consequentely, the MAP merit will favor paths with larger $u$ while the energy merit will not make such distinctions.
To highlight this difference between the estimators, the initial state was chosen around the origin, drawn from the normal distribution $\normald{[0,\; 0]\trans\!, \tfrac{1}{100}\ident\big.}$.

The system was simulated using the strong explicit order 1.5 scheme  \citep[Sec.~11.2]{kloeden1992nss} with time step \np{5e-4} and experiment duration $T=\np{16}$.
The measurements were generated according to the Gaussian mixture distribution
\begin{equation}
  \label{eq:outlier_distribution}
  Y_t|X_t\sim 
  (1 - p_o)\normald{U_t, \sigma_y^2 \big.} +
  p_o\normald{U_t, \sigma_o^2 \big.},
\end{equation}
where $\sigma_y = 0.5$ is the regular measurements' standard deviation, $\sigma_o$ is the outliers' standard deviation and $p_o$ is the outlier probability, the latter two of which were varied across experiments.
The measurements were taken at time intervals of \np{0.1}, i.e., $\tmeas = \{0, 0.1, \dotsc, 16\}$.

\begin{figure}[tb]
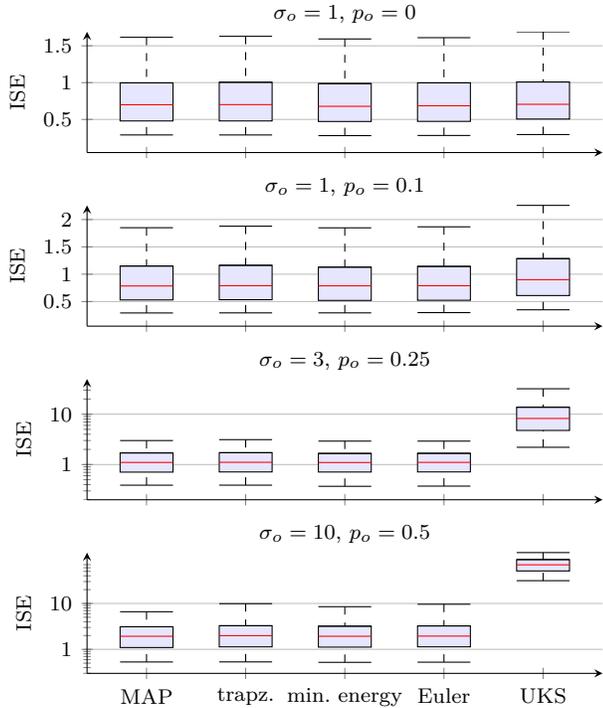

  \centering
  \inputtikzpicture{robust_estimation}
  \caption{
    Box and whisker plot of the integrated squared error for the robust estimation.
    The whiskers indicate the 5th and 95th percentiles.
  }
  \label{fig:robust_estimation}
\end{figure}

Student's $t$-distribution was used as the measurement likelihood for the MAP, minimum energy and discretized estimators to confer robustness against outliers.
The expression of the log-likelihood used in the estimation is
\begin{equation}
  \label{eq:robust_meas_likelihood}
  \ln \psi_t(y|x) = 
  -\tfrac{5}{2}\ln\paren{1 + \tfrac{(u - y)^2}{4\sigma_y^2}},
\end{equation}
which corresponds to the $t$-distribution with 4 degrees of freedom and scaling $\sigma_y$.
The estimates were also compared against the unscented Kalman smoother (UKS) of \citet{sarkka2008urt}, using the continuous-discrete SDE prediction step of \citet{arasaratnam2010ckf}.
The measurement covariance of $\sigma_y^2$ was used for the UKS.

Optimal control techniques were used to solve the MAP and minimum energy estimation problems.
The estimation was transcribed to an nonlinear programming (NLP) problem using a third-order Legendre--Gauss--Lobatto direct collocation method equivalent to the Hermite--Simpson method \citep[Sec.~4.5]{betts2010pmo}.
The resulting NLP was then solved using the IPOPT solver of \cite{wachter2006iip}, part of the open-source COIN-OR project.
The discretized MAP state path estimation problems with the Euler and
trapezoidal schemes were also solved using the IPOPT solver.
The estimation time step of \np{5e-3} was used for all estimators.

The experiment and estimation were repeated \np{2000} times for each combination of $\sigma_o$, $p_o$ and the integrated squared error (ISE) was calculated for each estimate:
\begin{equation}
  \label{eq:ise_def}
  \textstyle
  \operatorname{ISE} =
  \int_0^T \paren{\big.X_t - x(t)}\trans\paren{\big.X_t - x(t)} \,\dd t,
\end{equation}
where $X$ is the simulated state and $x$ its estimate.
The results are summarized in Fig.~\ref{fig:robust_estimation}.
It can be seen that, in all scenarios, the performance of the minimum energy and MAP estimators was comparable.
However, the performance of the UKS becomes significantly worse as the outliers become more frequent and with a higher variance.
These results mirror those of \citet{aravkin2012rtf}.

It should be noted that although the ISE distribution of both the MAP and minimum energy estimates was similar, the estimates were often different.
This point can be seen in Fig.~\ref{fig:vdp_robust_timeseries}, which shows a portion of the simulated and estimated timeseries for one experiment.
It is not apparent, however, whether any one is intrinsically better statistically than the other, except in very specific scenarios.
\emph{They represent different statistics with different interpretations}; it is up to the user to determine which one is preferrable for each application.

\begin{figure}[tb]
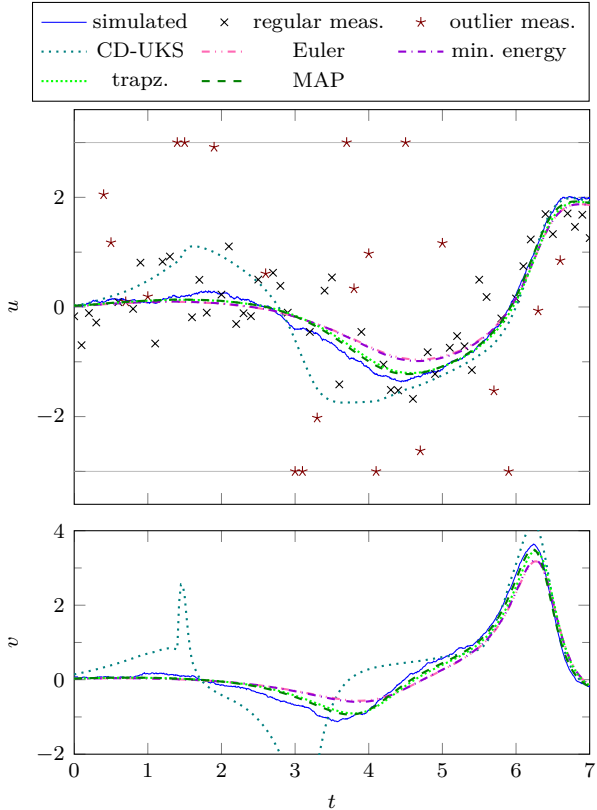

  \centering
  \inputtikzpicture{vdp_robust_timeseries}
  \caption{
    Example of a portion of the Van der Pol simulated experiment timeseries for $\sigma_o=3$, $p_o=0.25$.
    Outliers outside the plot range are represented at \np{+-3}.
    The MAP estimator favors paths with larger $u$.
  }
  \label{fig:vdp_robust_timeseries}
\end{figure}

\begin{sloppypar}
  We also note that, for systems with student's $t$-distribution as the
  measurement likelihood, the discretized MAP state path estimator with the
  Euler scheme is the $t$-robust smoother of \citet{aravkin2012rtf}.
  Therefore, a practical consequence of the hypo-convergence result of
  Thm.~\ref{th:main_theorem} is that, for sufficiently small discretization
  steps, the estimates of the $t$-robust smoother should be close to the
  minimum energy estimates.
  This was verified in the simulated experiments, and can be observed in
  Figs.~\ref{fig:robust_estimation} and \ref{fig:vdp_robust_timeseries}.
  The MAP state path estimates were also close to the discretized state path
  estimates with the trapezoidal discretization.
\end{sloppypar}

\section{Conclusions and Future Work}
\label{sec:conclusions}
In this paper, we addressed MAP state path estimation in continuous-discrete systems described by SDEs.
We began with a formal definition of mode for continuous-time stochastic processes so that the target statistic and its interpretation are well-defined.
We then showed how the Onsager--Machlup functional can be used to construct the MAP state path and the MAP noise path estimators, the latter of which is associated with the minimum energy state paths.
In this way, a clear statistical interpretation of the minimum energy state paths was provided, together with the conditions under which they coincide with the MAP state paths.

Additionally, we showed that the popular technique to solve these estimation problems by discretizing the dynamics and obtaining the discretized MAP state path estimate leads to different results depending on the discretization scheme used.
The Euler scheme, which is the simplest and therefore most widely used method for discretizing nonlinear SDEs, generates discretized estimators which hypo-converge to the MAP \emph{noise} path estimation.
The trapezoidal scheme, on the other hand, generates discretized estimators which hypo-converge to the MAP \emph{state} path estimation.

As mentioned in the end of Sec.~\ref{seq:discretized_map}, the difference between the energy and Onsager--Machlup functionals can be interpreted as accounting for the amplification of noise by the system.
The latter favours paths around more locally stable regions of the state-space, quantified by the sum of the eigenvalues of the drift Jacobian matrix $\nabla_x f$, i.e., the drift divergence $\diverg f$.
Paths around less stable regions of state space amplify more greatly the perturbations due to noise, which reduces the probability that the state stays inside small tubes around it.

Having proved the link between discretized state path estimation and variational problems also opens the possibility of using optimal control techniques to solve the former, as was done in the simulated examples.
These methods are better designed for the solution of infinite-dimensional optimization and are characterized by their order of convergence to the variational solution.
SDE discretization methods, on the other hand, are characterized by order of convergence in mean (or of the means) of the simulated and true stochastic processes \citep[Secs.~9.6 and 9.7]{kloeden1992nss}.
There also exist optimal control methods of arbitrarily high order; SDE methods for MAP calculation are limited to strong order \np{1.5} or weak order \np{4}.

The results herein can be generalized in a number of ways.
Rank-deficient $G$ matrices can be allowed by using the same approach as \citet{aihara1999mln}, for example.
Hypoconvergence can also be used to analyze the limits of discretized joint MAP state path and parameter estimation \citep{dutra2012jmaps}.
Finally, other popular higher order discretization schemes such as the order \np{1.5} It\=o--Taylor should generate problems which hypo-converge to MAP state path estimation as well.

\bibliographystyle{plainnat}
\bibliography{bibtex-compressed}

\end{document}